\newcommand{\Cdb}{\ensuremath{\mathbb{C}}}
\newcommand{\Ndb}{\ensuremath{\mathbb{N}}}
\newcommand{\Rdb}{\ensuremath{\mathbb{R}}}
\newcommand{\Tdb}{\ensuremath{\mathbb{T}}}
\newcommand{\Zdb}{\ensuremath{\mathbb{Z}}}
\newcommand{\A}{\mbox{${\mathcal A}$}}
\newcommand{\E}{\mbox{${\mathcal E}$}}
\newcommand{\M}{\mbox{${\mathcal M}$}}
\renewcommand{\P}{\mbox{${\mathcal P}$}}
\newcommand{\T}{\mbox{${\mathcal T}$}}
\newcommand{\norm}[1]{\Vert#1\Vert}
\newcommand{\bignorm}[1]{\bigl\Vert#1\bigr\Vert}
\newcommand{\Bignorm}[1]{\Bigl\Vert#1\Bigr\Vert}
\newtheorem{theorem}{Theorem}[section]
\newtheorem{lemma}[theorem]{Lemma}
\newtheorem{corollary}[theorem]{Corollary}
\newtheorem{proposition}[theorem]{Proposition}
\newtheorem{definition}[theorem]{Definition}
\theoremstyle{remark}
\newtheorem{remark}[theorem]{\bf Remark}
\theoremstyle{definition}
\numberwithin{equation}{section}
\begin{document}

\title[$\gamma$-bounded representations]
{A new approach to $\gamma$-bounded representations}

\author[C. Le Merdy]{Christian Le Merdy}
\email{clemerdy@univ-fcomte.fr}
\address{Laboratoire de Math\'ematiques de Besan\c con, 
Universit\'e de Franche-Comt\'e, 16 route de Gray
25030 Besan\c{c}on Cedex, FRANCE}

\date{\today}

\maketitle

\begin{abstract}
Let $X$ be a Banach space, let $(\Omega,\mu)$ 
be a $\sigma$-finite measure space and let $A,B\colon\Omega\to B(X)$
be strongly measurable $\gamma$-bounded functions. We show that
for  all $x\in X$ and all $x^*\in X^*$, there exist a Hilbert space $K$ and two measurable
functions 
$a_1\in L^\infty(\Omega;K)$ and $a_2\in L^\infty(\Omega;K)$  such that 
$\langle B(t)A(s)x,x^*\rangle = (a_2(t)\,\vert\, a_1(s))_{K}$ for a.e.
$(s,t)$ in $\Omega^2$, with
$\norm{a_1}_\infty\norm{a_2}_\infty\leq \gamma(A)\gamma(B)
\norm{x}\norm{x^*}$. This factorization property allows us to improve or simplify some
results concerning $\gamma$-bounded representations of groups or semigroups.

\end{abstract}

\vskip 1cm
\noindent
{\it 2000 Mathematics Subject Classification :}  47A99, 22D12, 47D06, 47A60

\smallskip
\noindent
{\it Key words:} 
$\gamma$-boundedness, factorizations, group representations, semigroups, functional calculus.

\vskip 1cm

\section{Introduction}\label{Intro}
In the last 25 years, the concepts of $\gamma$-boundedness and 
$R$-boundedness played a prominent role in the development of vector valued harmonic analysis 
and operator theory on Banach spaces.  It started with Weis's characterization of $L^p$-maximal regularity
for sectorial operators acting on UMD Banach spaces \cite{Weis} and after that,  $\gamma$-boundedness and 
$R$-boundedness had a huge impact in the theory of $H^\infty$-functional calculus, see  
\cite{KW1, KW0, KW} as well as \cite{HVVW} and the references therein. Besides this,
$\gamma$-boundedness played
a key role in control theory (see e.g. \cite{HK}), vector valued stochastic integration
(see e.g. \cite{VVW}) and Banach spaces representations. The latter is the 
main theme of this note.

Section \ref{Gamma2} is devoted to an abstract factorization result (Theorem \ref{2Main})
which can be stated as follows. Let $X$ be a Banach space, let $(\Omega,\mu)$ 
be a $\sigma$-finite measure space and consider two strongly measurable functions
$A,B\colon\Omega\to B(X)$. Assume that $A$ and $B$ are $\gamma$-bounded, that is,  
$\{A(t)\, :\, t\in\Omega\}$ and $\{B(t)\, :\, t\in\Omega\}$ are $\gamma$-bounded
subsets of $B(X)$.
Then for all $x\in X$ and all $x^*\in X^*$, there exist a Hilbert space $K$, with inner product
$(\,\cdotp\,\vert\, \,\cdotp)_{K}$, and two measurable
functions 
$a_1\in L^\infty(\Omega;K)$ and $a_2\in L^\infty(\Omega;K)$  such that 
\begin{equation}\label{2Factor}
\bigl\langle B(t)A(s)x,x^*\bigr\rangle = (a_2(t)\,\vert\, a_1(s))_{K}\qquad\hbox{for\ 
a.e.-}(s,t)\in\Omega^2,
\end{equation}
and 
$\norm{a_1}_\infty\norm{a_2}_\infty\leq \gamma(A)\gamma(B)
\norm{x}\norm{x^*}.$
Here $\gamma(\,\cdotp)$ stands for the $\gamma$-boundedness constant, see Definition \ref{Gamma-app}.

This relationship between $\gamma$-boundedness and Hilbert space factorizations is not so surprising.
Indeed when $X$ is a Hilbert space, bounded subsets of $B(X)$ are automatically $\gamma$-bounded,
see Remark \ref{2Hilb} below, and quite often, results involving a $\gamma$-boundedness assumption
are Banach space generalizations of Hilbertian results requiring the boundedness of a certain 
set of operator. See also \cite[Section 2.1]{KW1} for more on this phenomenon.

In Section 3, we apply Theorem \ref{2Main} to cases when $\Omega$ is either
a group or a semigroup, and  $A=B\colon \Omega\to B(X)$
is a multiplicative map (= a representation) which is assumed
$\gamma$-bounded. Indeed, Theorem \ref{2Main} seems to be particularly
suitable for this framework. We either improve or simplify 
results from \cite{A, AL, ALZ, LM1}.
Here is a more precise description.

Firstly, let $G$ be a locally compact group. Let $\pi\colon G\to B(X)$ be a strongly continuous
bounded representation and let $\sigma_\pi\colon L^1(G)\to B(X)$ be the associated 
bounded homomorphism defined by $\sigma_\pi(f)=\int_Gf(t)\pi(t)\,dt$.
The main result of \cite{LM1} asserts that if $\pi$ is $\gamma$-bounded and $G$ is amenable, then
$\sigma_\pi$ extends to a bounded homomorphism $w_\pi\colon C_\lambda(G)\to B(X)$, with
$\norm{w_\pi}\leq\gamma(\pi)^2$. Here
$C_\lambda(G)$ denotes the reduced $C^*$-algebra of $G$. In Subsection \ref{Group},
we give a new, simple proof of this result and extend it to non-amenable groups.

Secondly, let $\A_{0,S^1}(\Rdb)\subset C_0(\Rdb)$ be the function algebra introduced in \cite{ALZ}.
In the latter paper, we proved that all bounded $C_0$-semigroups on Hilbert space admit a bounded functional
calculus $\A_{0,S^1}(\Rdb)\to B(H)$ which extends the Hille-Phillips functional
calculus. In Subsection \ref{Semigroup1}, we generalize this result to Banach spaces by showing that 
if $\T=(T_t)_{t\geq 0}$ is a $\gamma$-bounded $C_0$-semigroup on $X$, then there
exists a bounded homomorphism 
$\rho_{\mathcal T} \colon \A_{0,S^1}(\Rdb)\longrightarrow B(X)$
such that $\rho_{\mathcal T}(\widehat{b}(-\,\cdotp)) =
\int_{0}^\infty b(t)T_t\,dt\,$
for all $b\in L^1(\Rdb_+)$. 

Thirdly, in Subsection \ref{PB} we give a 
discrete analogue of the above result,
which slightly improves \cite[Section 4.5]{A}. More precisely we introduce an analogue
$\A_{0,S^1}(\Tdb)\subset C(\Tdb)$ of $\A_{0,S^1}(\Rdb)$ and we show that
if $T\in B(X)$ is a power $\gamma$-bounded operator,  then there
exists a bounded homomorphism 
$\rho_{T} \colon \A_{0,S^1}(\Tdb)\longrightarrow B(X)$
such that $\rho_{T}(F) = \sum_{n=0}^\infty\widehat{F}(n)T^n$
for all polynomial $F(z)=\sum_{n=0}^\infty\widehat{F}(n)z^n.$

\section{$\gamma$-boundedness and factorizations}\label{Gamma2}

\subsection{Main statement}\label{MS} 
Let $X$ be a Banach space and let $X^*$ be its dual space. We let $\langle x,x^*\rangle$ denote the action
of any $x^*\in X^*$ on  $x\in X$. If $Y$ is another Banach space, we let $B(Y,X)$ denote 
the Banach space of all bounded operators 
from $Y$ into $X$. When $Y=X$, we simply write $B(X)$ instead of $B(X,X)$.

Let $(\Omega,\mu)$ be a $\sigma$-finite measure space and let $1\leq p\leq \infty$.
We let  $L^p(\Omega;X)$ denote the Bochner space 
of all measurable functions $F\colon \Omega\to X$
(defined up to almost everywhere zero functions)  such that the
norm function $t\mapsto \norm{F(t)}$ belongs to $L^p(\Omega)$.
This is a Banach space for the norm $\norm{\,\cdotp}_p$ defined by
$$
\norm{F}_p = \Bigl(\int_\Omega \norm{F(t)}^p\, d\mu(t)\Bigr)^\frac1p
$$
if $p$ is finite, and by $\norm{F}_\infty={\rm supess}_t\norm{F(t)}$ if $p=\infty$.
We will use the fact that $L^p(\Omega)\otimes X$
is dense in $L^p(\Omega;X)$ if $p$ is finite. We refer  to \cite[Chapters I and II]{DU}
for information on these spaces. In the sequel we will use them only
for $p\in\{1,2,\infty\}$.

Now consider two $\sigma$-finite measure spaces $(\Omega_1,\mu_1)$ 
and $(\Omega_2,\mu_2)$. We let
$\mathcal{V}_2(\Omega_1\times\Omega_2)$ be the set of 
all $\phi\in  L^\infty(\Omega_1\times\Omega_2)$ for which
there exist a Hilbert space $K$ and two functions
$a_1\in L^\infty(\Omega_1;K)$ and $a_2\in L^\infty(\Omega_2;K)$  such that 
\begin{equation}\label{2Factor}
\phi(s,t) = (a_2(t)\,\vert\, a_1(s))_{K}\qquad\hbox{for\ 
a.e.-}(s,t)\in\Omega_1\times\Omega_2.
\end{equation}
Here $(\,\cdotp\,\vert\, \,\cdotp)_{K}$ stands for the inner product on $K$.
For any $\phi$ as above, we set
$$
\nu_2(\phi)\,=\,\inf\bigl\{\norm{a_1}_{\infty}
\norm{a_2}_{\infty}\bigr\},
$$
where the infimum runs over all factorizations
of $\phi$ of the form (\ref{2Factor}).
It turns out that $\mathcal{V}_2(\Omega_1\times\Omega_2)$ is a vector space
and that $\nu_2(\,\cdotp)$ is a norm. We refer the reader to 
\cite[Subsection 2.1]{ALZ}
for information on this space. (See also Subsection \ref{Schur} below.)

We recall the definition of $\gamma$-boundedness.  Let
$(\gamma_j)_{j\geq 1}$ be a sequence of complex valued, independent
standard Gaussian variables on some probability space $(\Sigma,\nu)$,
and consider $G_0={\rm Span}\{\gamma_j\, :\, j\geq 1\}\subset L^2(\Sigma)$.
We let 
$$
G(X)\subset L^2(\Sigma;X)
$$ 
be the closure of $G_0\otimes X$,
equipped with the induced norm. We say that a set $\E\subset B(X)$ is $\gamma$-bounded if
there is a constant $C\geq 0$ such that for any finite families
$(V_1,\ldots, V_n)$ in $\E$, and $(x_1,\ldots,x_n)$ in $X$, we have
$$
\Bignorm{\sum_{j=1}^n \gamma_j\otimes V_j(x_j)}_{G(X)}\,\leq\, C\,
\Bignorm{\sum_{j=1}^n \gamma_j\otimes x_j}_{G(X)}.
$$
In this case, we let $\gamma(\E)$ denote the smallest possible $C$. It is usually
called the $\gamma$-boundedness constant of $\E$.
Obviously any  $\gamma$-bounded set is bounded.

\begin{remark}\label{2Hilb} 
If $X=H$ is a Hilbert space, then
$\bignorm{\sum_{j=1}^n \gamma_j\otimes x_j}_{G(X)}^2=\sum_j\norm{x_j}^2$
for all finite families $(x_j)_j$ in $H$. This implies that any bounded set 
$\E\subset B(H)$ is automatically $\gamma$-bounded, with 
$$
\gamma(\E)=\sup_{V\in \,{\mathcal E}}\norm{V}_{B(H)}.
$$
\end{remark}

\begin{definition}\label{Gamma-app} Let $I$ be a set and let $A\colon I\to B(X)$ be a 
function. We say that $A$ is $\gamma$-bounded if the range set 
$$
\E(A): =\{A(t)\, :\, t\in I\}\subset B(X)
$$
is $\gamma$-bounded. In this case, its $\gamma$-boundedness constant is defined by
$$
\gamma(A):=\gamma(\E(A)).
$$
\end{definition}

A function $A\colon\Omega
\to B(X)$ defined on a $\sigma$-finite measure space
is called  strongly measurable  if $t\mapsto A(t)x$ is measurable 
(from $\Omega$ into $X$) for all $x\in X$.

Our main result is the following. Its proof will be given in Subsection \ref{Pf}.

\begin{theorem}\label{2Main}
Let $(\Omega,\mu)$ be a $\sigma$-finite measure space and let $A,B\colon\Omega\to B(X)$
be two $\gamma$-bounded strongly measurable functions. Then for all
$x\in X$ and all $x^*\in X^*$, the function $\Theta_{x,x^*}\colon\Omega^2\to\Cdb$ defined by
\begin{equation}\label{2Theta}
\Theta_{x,x^*}(s,t) =\, \bigl\langle B(t)A(s)x,x^*\bigr\rangle
\end{equation}
belongs to $\mathcal{V}_2(\Omega^2)$, and we have
$$
\nu_2\bigl(\Theta_{x,x^*}\bigr)\leq 
\gamma(A)\gamma(B)\norm{x}\norm{x^*}.
$$
\end{theorem}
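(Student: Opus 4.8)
The plan is to reduce the factorization statement to a Hilbert-space factorization of a bilinear form on $G_0\otimes X$, exploiting the definition of $\gamma$-boundedness directly. First I would fix $x\in X$ and $x^*\in X^*$ and, by rescaling, assume $\norm{x}=\norm{x^*}=1$. Since $A$ and $B$ are strongly measurable, for each fixed $s$ the map $t\mapsto B(t)A(s)x$ is measurable, and $\Theta_{x,x^*}$ is a bounded measurable function on $\Omega^2$ (boundedness follows because $\gamma$-bounded sets are bounded). To build the Hilbert space $K$, I would work with $G(X)$: for $s\in\Omega$ define $\xi(s)=\gamma_1\otimes A(s)x\in G(X)$, and for $t\in\Omega$ define $\eta(t)\in G(X)^*$ by pairing against $B(t)^*x^*$ in an appropriate way, so that the pairing $\langle\eta(t),\xi(s)\rangle$ recovers $\langle B(t)A(s)x,x^*\rangle$. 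The $\gamma$-boundedness of $A$ gives $\norm{\xi(s)}_{G(X)}\le\gamma(A)\norm{x}$, which is the essential uniform bound on one side.

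The key step is to turn the $G(X)$-valued map $s\mapsto\xi(s)$ and the dual map $t\mapsto\eta(t)$ into genuine $L^\infty(\Omega;K)$ functions for a single Hilbert space $K$. Here I would use the standard fact (see \cite[Subsection 2.1]{ALZ}) that $\mathcal{V}_2$ and $\nu_2$ can be described via factorization through Hilbert space of the associated Schur-type multiplier: a bounded $\phi$ on $\Omega_1\times\Omega_2$ lies in $\mathcal{V}_2$ with $\nu_2(\phi)\le C$ iff the bilinear map $(F,G)\mapsto\int\!\!\int\phi(s,t)F(s)G(t)$ factors through a Hilbert space with norm $\le C$, equivalently iff for all finite families the obvious Grothendieck-type inequality holds. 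Concretely, I would verify the inequality
$$
\Bigl|\sum_{i,j}\Theta_{x,x^*}(s_i,t_j)\,c_i d_j\Bigr|
\le \gamma(A)\gamma(B)\,\Bignorm{\sum_i \gamma_i\otimes c_i}_{\ell^2}
\Bignorm{\sum_j\gamma_j\otimes d_j}_{\ell^2}
$$
type bound, but more efficiently one writes $\sum_{i,j}c_i d_j\langle B(t_j)A(s_i)x,x^*\rangle = \bigl\langle \sum_j d_j\gamma_j\otimes B(t_j)(\cdot), \text{(something)}\bigr\rangle$ and applies the defining inequality for $B$ to $\sum_j \gamma_j\otimes d_j A(s_i)x$ — wait, the indices must be decoupled, so instead I would pair $u=\sum_i\gamma_i\otimes c_i A(s_i)x\in G(X)$ against a functional coming from $\sum_j d_j\gamma_j\otimes B(t_j)^*x^*$, getting $|\langle\cdot,\cdot\rangle|\le\norm{u}_{G(X)}\cdot(\text{norm of the }B\text{-functional})$, and then estimate $\norm{u}_{G(X)}\le\gamma(A)\,\norm{\sum_i\gamma_i\otimes c_i x}_{G(X)} = \gamma(A)\,(\sum|c_i|^2)^{1/2}$, and symmetrically for $B$ using that $\{B(t)^*\}$ is $\gamma^*$-bounded with the same constant (adjoints preserve $\gamma$-boundedness constants on the relevant level, or one uses the $G(X^*)$ side directly).

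Once this bilinear estimate is in hand, the factorization $\Theta_{x,x^*}(s,t)=(a_2(t)\,|\,a_1(s))_K$ with $\norm{a_1}_\infty\norm{a_2}_\infty\le\gamma(A)\gamma(B)$ is exactly the content of the characterization of $\mathcal{V}_2$ recalled from \cite{ALZ}; measurability of $a_1,a_2$ follows from strong measurability of $A,B$ together with the separability considerations in that reference (one may reduce to a countable dense set of parameters and take $K$ to be an $\ell^2$ space). I expect the main obstacle to be the bookkeeping needed to decouple the two index sets cleanly so that the $\gamma$-boundedness inequalities for $A$ and for $B$ (or $B^*$) can be applied independently — in other words, exhibiting the correct intermediate Hilbert space and the correct pair of $G(X)$-valued (respectively $G(X^*)$-valued) maps whose inner product is $\Theta_{x,x^*}$. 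A secondary technical point is ensuring the resulting $a_i$ are honestly in $L^\infty(\Omega;K)$ rather than merely weakly measurable, which is handled by the $\sigma$-finiteness of $\mu$ and Pettis measurability applied on a separable subspace of $K$.
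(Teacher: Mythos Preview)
Your proposal contains a genuine gap: the bilinear inequality you aim to establish,
\[
\Bigl|\sum_{i,j}\Theta_{x,x^*}(s_i,t_j)\,c_i d_j\Bigr|\le \gamma(A)\gamma(B)\,\|c\|_{\ell^2}\|d\|_{\ell^2},
\]
is \emph{not} a characterization of $\mathcal{V}_2$. This inequality tests the Schur action only against rank-one matrices $[c_id_j]$, and it is satisfied by every bounded function $\Theta$ with constant $\|\Theta\|_\infty$. Membership in $\mathcal{V}_2(\Omega^2)$ with $\nu_2(\Theta)\le C$ is equivalent, by Proposition~\ref{2Description}, to $\Theta$ being a bounded Schur multiplier of norm $\le C$; in discrete form this reads
\[
\bigl\|[\Theta(s_i,t_j)\,c_{ij}]\bigr\|_{B(\ell^2_n)}\le C\,\bigl\|[c_{ij}]\bigr\|_{B(\ell^2_n)}
\]
for \emph{all} finite matrices $[c_{ij}]$. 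Your argument never introduces a general matrix, so it cannot reach this conclusion. (The ``bilinear map $(F,G)\mapsto\int\!\!\int\phi FG$ factors through Hilbert space'' criterion you invoke is likewise just boundedness of $\phi$, not the $\mathcal{V}_2$ condition.)

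The decoupling difficulty you flag is therefore not bookkeeping but the crux of the proof. Pairing $u=\sum_i\gamma_i\otimes c_iA(s_i)x\in G(X)$ against $\sum_j\gamma_j\otimes d_jB(t_j)^*x^*$ picks up only the diagonal $i=j$, since the $\gamma_i$ are orthonormal; it cannot produce the full double sum, let alone a sum weighted by an arbitrary $[c_{ij}]$. The paper resolves this by inserting an arbitrary operator $S_\varphi\in B(L^2(\Omega))$ \emph{between} the two uses of $\gamma$-boundedness: one forms $M_A(f\otimes x)\in\gamma(L^2(\Omega),X)$ via Proposition~\ref{2Mult}, composes on the $L^2$-side with $S_\varphi$ using the ideal property of $\gamma$-spaces (Proposition~\ref{2Tensor}) to acquire the factor $\|S_\varphi\|_{B(L^2)}$, and only then applies $M_B$. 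This yields precisely the Schur multiplier estimate needed for Proposition~\ref{2Description}. Your $G(X)/G(X^*)$ pairing provides no substitute for this intermediate step.
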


\medskip
\subsection{The Kalton-Weis $\gamma$-spaces}\label{G}
In order to study $\gamma$-boundedness, 
 Kalton and Weis introduced the so-called 
$\gamma$-spaces in the paper
\cite{KW}, which began to circulate about 20 years ago. 
We refer either to this reference
or to  \cite[Chapter 9]{HVVW} for a comprehensive exposition.
Here we  give a brief presentation and state 
two important results which will be used in the proof of Theorem \ref{2Main}.

Let $X$ be a Banach space and let $H$ be a Hilbert space. An operator $u\in B(H,X)$ is called $\gamma$-summing if 
$$
\norm{u}_\gamma : = 
\sup\Bigl\{\Bignorm{\sum_j \gamma_j\otimes u(e_j)}_{G(X)}\Bigr\}\,<\infty,
$$
where the supremum runs over all finite orthonormal systems $(e_j)_j$ in $H$.
We let $\gamma_\infty(H,X)$ denote the 
space of all $\gamma$-summing $u\in B(H,X)$.
Equipped with the norm $\norm{\, \cdotp}_\gamma$, 
$\gamma_\infty(H,X)$ is a Banach space.
Note that $\norm{u}_\gamma\geq \norm{u}_{B(H,X)}$.
Hence we have a contractive embedding
\begin{equation}\label{2Embed1}
\gamma_\infty(H,X)\subset B(H,X).
\end{equation}

We identify the algebraic tensor product $H^*\otimes X$ with the subspace of
$B(H,X)$ of all bounded finite rank operators in the usual way.
Namely for any finite families $(\eta_j)_j$ in $H^*$ and $(x_j)_j$ in
$X$, we identify the element $\sum_j\eta_j\otimes x_j$
with the operator $u\colon H\to X$ defined by letting
$u(h)=\sum_j \langle h,\eta_j\rangle x_j\,$, for all $h\in
H$. It turns out that any finite rank bounded operator $u\colon H\to X$ is
$\gamma$-summing. More precisely, we have the following easy
estimate whose proof is left to the reader.

\begin{lemma}\label{2Rk1}
For any $\eta\in H^*$ and $x\in X$, we have $\norm{\eta\otimes x}_\gamma=\norm{\eta}\norm{x}.$
\end{lemma}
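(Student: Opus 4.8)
The plan is to compute $\norm{\eta\otimes x}_\gamma$ directly from its definition, by evaluating the rank-one operator $u=\eta\otimes x\in B(H,X)$ on an arbitrary finite orthonormal system and then optimizing. Recall that $u$ is given by $u(h)=\langle h,\eta\rangle x$ for $h\in H$. First I would fix a finite orthonormal system $(e_j)_j$ in $H$ and note that
\[
\sum_j \gamma_j\otimes u(e_j)=\Bigl(\sum_j \langle e_j,\eta\rangle\,\gamma_j\Bigr)\otimes x,
\]
which is an elementary tensor in $G_0\otimes X\subset G(X)$.

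Next I would invoke the fact that $G(X)$ carries the norm induced from $L^2(\Sigma;X)$, so that for an elementary tensor $g\otimes x$ with $g\in G_0$ one has $\norm{g\otimes x}_{G(X)}=\norm{g}_{L^2(\Sigma)}\norm{x}$. Since the variables $(\gamma_j)_j$ are independent, centered and $L^2$-normalized, they form an orthonormal system in $L^2(\Sigma)$, whence $\bignorm{\sum_j \langle e_j,\eta\rangle\gamma_j}_{L^2(\Sigma)}=\bigl(\sum_j|\langle e_j,\eta\rangle|^2\bigr)^{1/2}$ and therefore
\[
\Bignorm{\sum_j \gamma_j\otimes u(e_j)}_{G(X)}=\Bigl(\sum_j|\langle e_j,\eta\rangle|^2\Bigr)^{1/2}\norm{x}.
\]

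Finally I would take the supremum over all finite orthonormal systems $(e_j)_j$. Writing $\xi\in H$ for the Riesz representative of $\eta$, so that $\langle e_j,\eta\rangle=(e_j\,\vert\,\xi)_H$ and $\norm{\xi}_H=\norm{\eta}$, Bessel's inequality gives $\sum_j|\langle e_j,\eta\rangle|^2\leq\norm{\eta}^2$ for every orthonormal system, hence $\norm{\eta\otimes x}_\gamma\leq\norm{\eta}\norm{x}$; and when $\eta\neq 0$ the one-element system $e_1=\xi/\norm{\xi}$ turns this into an equality, the case $\eta=0$ being trivial. I do not expect any genuine obstacle here: the computation is short, and the only point deserving attention is to keep the Riesz identification $H\cong H^*$ consistent so that Bessel's inequality and its equality case are applied to the scalars $\langle e_j,\eta\rangle$ correctly.
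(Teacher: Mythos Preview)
Your argument is correct and is precisely the straightforward computation the paper has in mind: the lemma is stated with the remark that its ``proof is left to the reader,'' and your direct evaluation of $\sum_j\gamma_j\otimes u(e_j)$ on a finite orthonormal system, followed by Bessel's inequality and the one-vector choice $e_1=\xi/\norm{\xi}$ for equality, is exactly the intended verification.
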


We let 
$$
\gamma(H,X)\subset \gamma_\infty(H,X)
$$
denote the closure of $H^*\otimes X$ in $\gamma_\infty(H,X)$.

The following is a special case of \cite[Theorem 9.1.10]{HVVW}.

\begin{proposition}\label{2Tensor}
Let $S\in B(H)$ and let $u\in \gamma_\infty(H,X)$. Then the composition 
operator $u\circ S\colon H\to X$ belongs to $\gamma_\infty(H,X)$ and we have 
$\norm{u\circ S}_\gamma\leq  \norm{S}_{B(H)}\norm{u}_\gamma$.
\end{proposition}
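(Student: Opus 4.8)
The plan is to establish the estimate by a Gaussian dilation argument resting on the rotation invariance of Gaussian vectors. By homogeneity of $\norm{\,\cdotp}_\gamma$ we may assume $\norm{S}_{B(H)}\leq 1$, since the case $S=0$ is trivial and otherwise one rescales. It then suffices to bound $\bignorm{\sum_{j=1}^n \gamma_j\otimes u(Sf_j)}_{G(X)}$ by $\norm{u}_\gamma$ for an arbitrary finite orthonormal system $(f_1,\ldots,f_n)$ in $H$: taking the supremum over all such systems yields simultaneously that $u\circ S$ is $\gamma$-summing and the desired norm bound, because $(u\circ S)(f_j)=u(Sf_j)$.

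First I would choose an orthonormal basis $(e_1,\ldots,e_m)$ of the finite-dimensional subspace ${\rm Span}\{Sf_1,\ldots,Sf_n\}$ and write $Sf_j=\sum_{k=1}^m a_{kj}e_k$, so that the $m\times n$ scalar matrix $A=(a_{kj})$ encodes $S$ in the relevant coordinates. Since $(f_j)_j$ is orthonormal and $\norm{S}_{B(H)}\leq 1$, one has $\norm{A}\leq 1$, hence $I_m-AA^*\geq 0$. I would then factor $I_m-AA^*=CC^*$ for some $m\times p$ matrix $C$ and form the block matrix $M=[\,A\,|\,C\,]$ of size $m\times(n+p)$, which satisfies $MM^*=AA^*+CC^*=I_m$; in other words $M$ has orthonormal rows.

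The heart of the argument is the rotation invariance of Gaussian vectors. Introducing fresh independent standard Gaussians $\gamma_{n+1},\ldots,\gamma_{n+p}$ and setting $\widehat\gamma_k=\sum_{j=1}^{n+p}M_{kj}\gamma_j$, the identity $MM^*=I_m$ guarantees that $(\widehat\gamma_1,\ldots,\widehat\gamma_m)$ is again a family of independent standard Gaussians. Consequently $\sum_k\widehat\gamma_k\otimes u(e_k)$ and $\sum_k\gamma_k\otimes u(e_k)$ have the same distribution as $X$-valued random variables, so since $(e_k)_k$ is orthonormal,
$$
\Bignorm{\sum_{k=1}^m\widehat\gamma_k\otimes u(e_k)}_{G(X)}=\Bignorm{\sum_{k=1}^m\gamma_k\otimes u(e_k)}_{G(X)}\leq\norm{u}_\gamma.
$$

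To conclude I would apply the conditional expectation $\Edb[\,\cdotp\,|\,\mathcal{F}]$ relative to the $\sigma$-algebra $\mathcal{F}$ generated by $\gamma_1,\ldots,\gamma_n$, which is a contraction on $L^2(\Sigma;X)$ and hence on $G(X)$. Since the added Gaussians have mean zero and are independent of $\gamma_1,\ldots,\gamma_n$, this projection sends $\widehat\gamma_k$ to $\sum_{j=1}^n a_{kj}\gamma_j$, so that
$$
\Edb\Bigl[\sum_{k=1}^m\widehat\gamma_k\otimes u(e_k)\,\Big|\,\mathcal{F}\Bigr]=\sum_{k=1}^m\Bigl(\sum_{j=1}^n a_{kj}\gamma_j\Bigr)\otimes u(e_k)=\sum_{j=1}^n\gamma_j\otimes u\Bigl(\sum_{k=1}^m a_{kj}e_k\Bigr)=\sum_{j=1}^n\gamma_j\otimes u(Sf_j),
$$
using $Sf_j=\sum_k a_{kj}e_k$. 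Combining the contractivity of the conditional expectation with the displayed estimate gives $\bignorm{\sum_j\gamma_j\otimes u(Sf_j)}_{G(X)}\leq\norm{u}_\gamma$, which completes the proof. The step requiring the most care is the dilation: one must verify that the contraction $A$ genuinely extends to a matrix with orthonormal rows, and that for \emph{complex} Gaussians the family $(\widehat\gamma_k)_k$ is again standard and independent (this needs both the covariance $MM^*=I_m$ and the vanishing of the pseudo-covariance, which holds since the $\gamma_j$ are proper complex Gaussians). Once this is checked, the conditional-expectation step is routine.
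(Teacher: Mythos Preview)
Your proof is correct. The paper does not actually prove this proposition: it simply records it as a special case of \cite[Theorem 9.1.10]{HVVW} and moves on. Your argument---reduce to a finite-dimensional contraction matrix $A$, dilate it to a matrix $M$ with orthonormal rows, use rotation invariance of complex Gaussians to identify the distribution of $\sum_k\widehat\gamma_k\otimes u(e_k)$, and then project back via conditional expectation---is precisely the standard proof of the right ideal property for $\gamma$-summing operators, and is essentially the argument one finds in the cited reference. The care you take with the complex case (checking that the pseudo-covariance vanishes so that $(\widehat\gamma_k)_k$ is genuinely standard) is appropriate, and the conditional-expectation step is clean since $G(X)$ carries the $L^2(\Sigma;X)$-norm.
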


We now focus on the case when $H=L^{2}(\Omega)$, for some
$\sigma$-finite measure space $(\Omega,\mu)$. We identify $L^{2}(\Omega)^*$ and
$L^{2}(\Omega)$ in the usual way, using integration. Thus we both have
\begin{equation}\label{2Embed2}
L^2(\Omega)\otimes X\subset L^2(\Omega;X)
\qquad\hbox{and}\qquad L^2(\Omega)\otimes X\subset \gamma(L^2(\Omega),X).
\end{equation}
Applying (\ref{2Embed1}), we further obtain a contractive
embedding 
\begin{equation}\label{2Embed3}
\gamma_\infty(L^2(\Omega),X)\subset B(L^2(\Omega),X).
\end{equation}
Also, there is a natural contractive
embedding 
\begin{equation}\label{2Embed4}
L^{2}(\Omega; X)\subset B(L^2(\Omega),X)
\end{equation}
obtained by
identifying any $F\in L^{2}(\Omega; X)$ with the operator 
$u_F\colon L^2(\Omega)\to X$ defined by
$$
u_F(h) = \int_{\Omega} F(t)h(t)\,d\mu(t)\,,\qquad
h\in L^{2}(\Omega).
$$
This integral makes sense since $Fh\in L^1(\Omega;X)$ if $h\in L^2(\Omega)$.

It is easy to check that the embeddings provided
by (\ref{2Embed2}), (\ref{2Embed3}) and (\ref{2Embed4}) are all compatible 
with each other. Since $\gamma_\infty(L^2(\Omega),X)$ and
$L^{2}(\Omega; X)$ are both regarded as subspaces of $B(L^2(\Omega),X)$,
it makes sense to consider the intersections  $\gamma_\infty(L^2(\Omega),X)\cap L^2(\Omega;X)$
and $\gamma(L^2(\Omega),X)\cap L^2(\Omega;X)$. In this respect,
an element $F$ of $\gamma(L^2(\Omega),X)\cap L^2(\Omega;X)$ (resp. 
$\gamma_\infty(L^2(\Omega),X)\cap L^2(\Omega;X)$) is simply a function
$F\in  L^2(\Omega;X)$ such that $u_F$ belongs to 
$\gamma(L^2(\Omega),X)$ (resp. 
$\gamma_\infty(L^2(\Omega),X)$).

Consider a bounded, strongly measurable function
$A\colon\Omega\to B(X)$.
For any $F\in L^2(\Omega;X)$, let 
$AF\colon\Omega\to X$ be defined by 
$$
(AF)(t)=[A(t)](F(t)), \qquad t\in\Omega.
$$
It is clear that $AF$ is measurable if $F$ belongs to
$L^2(\Omega)\otimes X$. Since the latter is dense in 
$L^2(\Omega;X)$, we  obtain that $AF$ is measurable
for all $F$ in 
$L^2(\Omega;X)$. Finally, the boundedness of $A$ ensures that 
$AF\in L^2(\Omega;X)$ for all $F$ in 
$L^2(\Omega;X)$. This allows to define
\begin{equation}\label{2MA}
M_A\colon L^2(\Omega;X)\longrightarrow L^2(\Omega;X),\qquad M_A(F)=AF.
\end{equation}

The following $\gamma$-multiplier theorem is a slight
variant of \cite[Theorem 9.5.1]{HVVW}), see also \cite[Proposition 3.5]{LM1}.

\begin{proposition}\label{2Mult}
Let $A\colon\Omega\to B(X)$ be a strongly
measurable function and assume that $A$ is $\gamma$-bounded.
For any $F\in \gamma(L^2(\Omega),X)\cap L^2(\Omega;X)$, 
$M_A(F)$ belongs to $\gamma_\infty(L^2(\Omega),X)$ and we have
$$
\norm{M_A(F)}_\gamma\leq\gamma(A)\norm{F}_\gamma.
$$
\end{proposition}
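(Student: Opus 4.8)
The plan is to establish the estimate first for functions $A$ taking finitely many values, and then to recover the general case by approximation. So I would begin by assuming $A=\sum_{k=1}^m \mathbf{1}_{\Omega_k}T_k$, where $(\Omega_k)_{k=1}^m$ is a finite measurable partition of $\Omega$ and $T_1,\dots,T_m\in B(X)$, the set $\{T_1,\dots,T_m\}$ being $\gamma$-bounded with constant $\leq\gamma(A)$. In this situation $M_A(F)(t)=T_kF(t)$ for $t\in\Omega_k$, and a direct computation with the definition of $u_F$ shows that the associated operator factors as
$$
u_{M_A(F)}=\sum_{k=1}^m T_k\circ u_F\circ P_k,
$$
where $P_k\in B(L^2(\Omega))$ denotes multiplication by $\mathbf{1}_{\Omega_k}$, an orthogonal projection. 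The $P_k$ have mutually orthogonal ranges $L^2(\Omega_k)$ and satisfy $\sum_k P_k=I$.

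The heart of the argument is to estimate $\norm{u_{M_A(F)}}_\gamma$ from this factorization. Fix a finite orthonormal system $(f_l)_{l=1}^N$ in $L^2(\Omega)$. For each $k$ I would choose a finite orthonormal basis $(g^k_i)_i$ of the span of $\{P_kf_l:1\leq l\leq N\}\subseteq L^2(\Omega_k)$, so that the whole family $\{g^k_i\}_{k,i}$ is orthonormal in $L^2(\Omega)$, and write $P_kf_l=\sum_i c^k_{li}g^k_i$. Then, setting $y_{k,i}=T_k\bigl(u_F(g^k_i)\bigr)$, one has $u_{M_A(F)}(f_l)=\sum_{k,i}c^k_{li}\,y_{k,i}$, whence
$$
\sum_l\gamma_l\otimes u_{M_A(F)}(f_l)=\sum_l\gamma_l\otimes\Bigl(\sum_{k,i}c^k_{li}\,y_{k,i}\Bigr).
$$
The scalar matrix $(c^k_{li})$ is a coisometry, because $\langle f_l,f_{l'}\rangle=\sum_k\langle P_kf_l,P_kf_{l'}\rangle$ forces its rows to be orthonormal; in particular it is a contraction. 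By the Gaussian contraction principle, the $G(X)$-norm of the right-hand side is therefore at most $\norm{\sum_{k,i}\gamma_{k,i}\otimes y_{k,i}}_{G(X)}$, for fresh independent Gaussians $\gamma_{k,i}$. Applying the $\gamma$-boundedness of $\{T_k\}$ (whose definition permits repeated operators), and then the definition of the $\gamma$-summing norm against the orthonormal system $(g^k_i)_{k,i}$, yields
$$
\Bignorm{\sum_l\gamma_l\otimes u_{M_A(F)}(f_l)}_{G(X)}\leq\gamma(A)\Bignorm{\sum_{k,i}\gamma_{k,i}\otimes u_F(g^k_i)}_{G(X)}\leq\gamma(A)\norm{F}_\gamma.
$$
Taking the supremum over all $(f_l)$ gives $\norm{M_A(F)}_\gamma\leq\gamma(A)\norm{F}_\gamma$ in the finitely-valued case.

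For general $A$ I would approximate. Using strong measurability together with $\sigma$-finiteness, I would choose simple functions $A_n$ whose values all lie in $\{A(t):t\in\Omega\}$ and with $A_n(t)y\to A(t)y$ for a.e. $t$ and every $y$ in the separable essential range of $F$; since the values of $A_n$ belong to the original set, each $A_n$ is $\gamma$-bounded with $\gamma(A_n)\leq\gamma(A)$. As $\norm{A(t)}\leq\gamma(A)$ for all $t$, dominated convergence gives $M_{A_n}(F)\to M_A(F)$ in $L^2(\Omega;X)$, hence $u_{M_{A_n}(F)}\to u_{M_A(F)}$ in $B(L^2(\Omega),X)$, in particular for the strong operator topology. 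The finitely-valued case gives $\norm{u_{M_{A_n}(F)}}_\gamma\leq\gamma(A)\norm{F}_\gamma$ for every $n$, and since for each fixed orthonormal system the quantity $\norm{\sum_l\gamma_l\otimes(\,\cdot\,)(f_l)}_{G(X)}$ is continuous for the strong operator topology, $\norm{\,\cdot\,}_\gamma$ is lower semicontinuous; passing to the liminf then shows that $u_{M_A(F)}$ is $\gamma$-summing with $\norm{M_A(F)}_\gamma\leq\gamma(A)\norm{F}_\gamma$.

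The step I expect to be the main obstacle is the reduction to finitely-valued $A$: one must manufacture the simple approximants $A_n$ out of mere strong measurability while keeping their values inside the $\gamma$-bounded set $\{A(t):t\in\Omega\}$, so that the $\gamma$-constant is not inflated. This relies on the essential separability furnished by $F\in L^2(\Omega;X)$ and on a measurable selection argument restricted to the separable range of $F$. The Gaussian contraction step is the other delicate point, but it is a standard property of the spaces $G(X)$.
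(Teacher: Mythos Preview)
The paper does not actually prove this proposition: it is stated as ``a slight variant of \cite[Theorem 9.5.1]{HVVW}, see also \cite[Proposition 3.5]{LM1}'' and no argument is given. So there is no in-paper proof to compare against; your proposal has to be judged on its own.

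Your argument is correct and is, in essence, the standard route to the $\gamma$-multiplier theorem. The simple-function case is handled cleanly: the identity $u_{M_A(F)}=\sum_k T_k\circ u_F\circ P_k$ is right, the matrix $(c^k_{li})$ is indeed a coisometry, and the Gaussian covariance/contraction principle (extend the coisometry to a unitary and use rotational invariance, or use the conditional-expectation domination for Gaussian vectors with covariance $\leq I$) gives exactly the inequality you need. Applying $\gamma$-boundedness with repeated operators and then evaluating $u_F$ on the orthonormal family $(g^k_i)_{k,i}$ yields the bound.

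The approximation step is, as you say, the delicate point, but your outline is the right one and can be made rigorous. Since $F\in L^2(\Omega;X)$ is essentially separably valued, fix a separable $X_0\supset\mathrm{ess\,ran}(F)$ and a dense sequence $(d_j)$ in its unit ball. Each $t\mapsto A(t)d_j$ is Bochner measurable, hence essentially separably valued; it follows that $t\mapsto A(t)|_{X_0}$ is measurable into the ball of $B(X_0,X)$ equipped with the SOT-metric $\rho(S,T)=\sum_j 2^{-j}\min(1,\norm{(S-T)d_j})$, with $\rho$-separable essential range. One can then pick simple approximants whose values are of the form $A(t_0)$ (choose a representative point in each cell of the partition), so that $\gamma(A_n)\leq\gamma(A)$ and $A_n(t)y\to A(t)y$ for a.e.\ $t$ and every $y\in X_0$. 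Dominated convergence gives $M_{A_n}(F)\to M_A(F)$ in $L^2(\Omega;X)$, hence $u_{M_{A_n}(F)}\to u_{M_A(F)}$ strongly, and the lower semicontinuity of $\norm{\cdot}_\gamma$ under SOT-convergence (it is a supremum of SOT-continuous seminorms) finishes the job.

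One small remark: an equivalent way to organize the reduction is to approximate $F$ rather than $A$. Since $F\in\gamma(L^2(\Omega),X)$, take $F_n\in L^2(\Omega)\otimes X$ with $\norm{F_n-F}_\gamma\to 0$; for finite-rank $F_n$ the simple-$A$ computation can be bypassed by working directly with an orthonormal basis adapted to $F_n$. This is the route taken in some presentations of \cite[Theorem 9.5.1]{HVVW}. Either reduction leads to the same core estimate, and both are valid.
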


\medskip
\subsection{Measurable Schur multipliers}\label{Schur} 
We present a key relationship between the 
${\mathcal V}_2$-spaces introduced in Subsection \ref{MS} and   Schur multipliers.

For any integer $n\geq 1$, we let $M_n$
denote the algebra of all $n\times n$ matrices and we identify $M_n$ with 
$B(\ell^2_n)$ in the usual way. For any family $\phi=\{\phi(k,l)\}_{1\leq k,l\leq n}$
of complex numbers, the Schur multiplication operator 
$T_\phi\colon B(\ell^2_n)\to B(\ell^2_n)$ is defined by
$$
T_\phi\bigl([c_{kl}]\bigr) = [\phi(k,l)c_{kl}],\qquad 
[c_{kl}]\in M_n.
$$
Next, let $I$ be any set and
let $\{e_{kl}\, :\, (k,l)\in I^2\}$ denote
the standard  matrix units on $B(\ell^2_I)$. Consider
$\phi=\{\phi(k,l)\}_{(k,l)\in I^2}\in \ell^\infty_{I^2}$. 
For any finite subset $J\subset I$, let $\phi_J=\{\phi(k,l)\}_{(k,l)\in J^2}$ be the 
restriction of this family to $J^2$. We say that  $\phi$ is a bounded Schur multiplier if 
there exists a constant $C\geq 0$ such that 
\begin{equation}\label{2C}
\norm{T_{\phi_J}\colon B(\ell^2_J)\to B(\ell^2_J)}
\leq C
\end{equation}
for all finite subsets $J\subset I$. It is plain that in this case, there exists a necessarily unique
$w^*$-continuous map $T_\phi\colon B(\ell^2_I)\to B(\ell^2_I)$ such that 
$T_\phi(e_{kl})=\phi(k,l)e_{kl}$ for all $(j,k)\in I^2$. Moreover the norm
$\norm{T_\phi\colon B(\ell^2_I)\to B(\ell^2_I)}$ is equal to the smallest possible $C\geq 0$ 
such that (\ref{2C}) holds true for all finite subsets $J\subset I$.

We now describe measurable 
Schur multipliers, following Haagerup \cite{Haa} and Spronk \cite{Spronk}.
Let $(\Omega,\mu)$ be a $\sigma$-finite measure space and let $S^2(L^2(\Omega))$ denote the
Hilbert space of all Hilbert-Schmidt operators on $L^2(\Omega)$.
For any  $\varphi\in L^2(\Omega^2)$, we let $S_\varphi\colon L^2(\Omega)\to L^2(\Omega)$
be the bounded operator defined by 
\begin{equation}\label{2S}
[S_\varphi(h)](s) = \int_\Omega \varphi(s,t)h(t)\, dt,
\qquad h\in L^2(\Omega).
\end{equation}
It is well-known that $S_\varphi\in S^2(L^2(\Omega))$ for all $\varphi\in
L^2(\Omega^2)$ and that the mapping 
$\varphi\mapsto S_\varphi$ yields a unitary identification
\begin{equation}\label{2HS}
L^2(\Omega^2)\simeq S^2(L^2(\Omega)),
\end{equation}
see e.g. \cite[Theorem VI. 23]{RS}.

Let $\phi\in L^\infty(\Omega^2)$. Using (\ref{2HS}), we define
$T_\phi\colon S^2(L^2(\Omega))\to S^2(L^2(\Omega))$
by setting 
$$
T_\phi(S_\varphi)=S_{\phi\varphi},\qquad
\varphi\in L^2(\Omega^2).
$$
Let $S^\infty(L^2(\Omega))$ denote the Banach space of all compact operators
on $L^2(\Omega)$ and recall that $S^2(L^2(\Omega))$ is a dense
subspace of $S^\infty(L^2(\Omega))$.
We say that $\phi$ is a bounded Schur multiplier if $T_\phi$ extends to a bounded
operator from $S^\infty(L^2(\Omega))$ into itself.  
In this case, 
$T_\phi$ admits a necessarily unique $w^*$-continuous extension from $B(L^2(\Omega))$ into
itself. In the sequel, 
we  keep the notation
$T_\phi\colon B(L^2(\Omega))\to B(L^2(\Omega))$
to denote this extension.

In the case when $\Omega=I$ is a discrete set equipped with the counting measure,
we have $L^2(\Omega)=\ell^2_I$ and $L^\infty(\Omega^2)=\ell^\infty_{I^2}$. Then 
the above defintions are equivalent to the ones given at the beginning of this subsection.

Bounded Schur multipliers can be described in terms of Hilbert space
characterizations.
In the discrete case, the following result  
was stated by Paulsen in \cite[Corolary 8.8]{Pau} and
by Pisier in \cite[Theorem 5.1]{Pis}.
For the general case, we refer to Haagerup
\cite{Haa} and Spronk \cite[Section 3.2]{Spronk}.

\begin{proposition}\label{2Description}
Consider a function $\phi\in L^\infty(\Omega^2)$. Then $\phi$  is a bounded Schur multiplier if and only if
$\phi\in{\mathcal V}_2(\Omega^2)$. In this case, we have
$$
\bignorm{T_\phi\colon B(L^2(\Omega))\longrightarrow B(L^2(\Omega))}
=\nu_2(\phi).
$$
\end{proposition}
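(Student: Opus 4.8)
The statement combines an equivalence with an isometric identity, so I would prove the two inclusions separately while tracking the constants. The plan is to establish the easy half, that a factorization yields a bounded Schur multiplier with $\bignorm{T_\phi\colon B(L^2(\Omega))\to B(L^2(\Omega))}\le\nu_2(\phi)$, by an explicit factorization through an ampliation, and then to establish the substantial converse, that a bounded Schur multiplier admits a factorization with $\nu_2(\phi)\le\norm{T_\phi}$, by first treating the finite discrete case and then transferring to the measurable setting. Throughout I use the convention that $(\,\cdotp\,\vert\,\,\cdotp)_K$ is linear in the first and conjugate-linear in the second variable.

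For the easy half, suppose $\phi(s,t)=(a_2(t)\,\vert\,a_1(s))_K$ with $a_1,a_2\in L^\infty(\Omega;K)$. I would introduce the multiplication operators $M_{a_1},M_{a_2}\colon L^2(\Omega)\to L^2(\Omega;K)$ given by $(M_{a_i}h)(\cdotp)=a_i(\cdotp)h(\cdotp)$, which are bounded with $\norm{M_{a_i}}\le\norm{a_i}_\infty$, and write $S\otimes I_K\in B(L^2(\Omega;K))$ for the ampliation of $S\in B(L^2(\Omega))$ (a normal, isometric $*$-homomorphism, so $\norm{S\otimes I_K}=\norm{S}$). On Hilbert--Schmidt operators $S=S_\varphi$ a one-line kernel computation gives
\[
T_\phi(S)=M_{a_1}^*\,(S\otimes I_K)\,M_{a_2},
\]
since the right-hand side has kernel $(s,t)\mapsto\varphi(s,t)(a_2(t)\,\vert\,a_1(s))_K=\phi(s,t)\varphi(s,t)$, i.e. the kernel of $S_{\phi\varphi}=T_\phi(S_\varphi)$. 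As $S\mapsto M_{a_1}^*(S\otimes I_K)M_{a_2}$ is bounded and $w^*$-continuous on all of $B(L^2(\Omega))$ and agrees with $T_\phi$ on the (dense) Hilbert--Schmidt operators, it is the required $w^*$-continuous extension; hence $\norm{T_\phi}\le\norm{a_1}_\infty\norm{a_2}_\infty$, and taking the infimum over factorizations yields $\norm{T_\phi}\le\nu_2(\phi)$.

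For the converse I would first handle the finite discrete case $\Omega=\{1,\dots,n\}$, where $T_\phi$ acts on $M_n$. The key structural fact is that $T_\phi$ is a bimodule map over the diagonal subalgebra, $T_\phi(D_1XD_2)=D_1T_\phi(X)D_2$ for diagonal $D_1,D_2$; for such maps the bounded and completely bounded norms coincide, $\cbnorm{T_\phi}=\norm{T_\phi}$ (see \cite{Pau}). I would then invoke the Wittstock--Paulsen factorization of completely bounded maps to write $T_\phi(X)=V^*\pi(X)W$ for a $*$-representation $\pi$ of $B(\ell^2_n)$ with $\norm{V}\norm{W}=\cbnorm{T_\phi}$. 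Since every representation of $B(\ell^2_n)$ is a multiple of the identity, $\pi(X)\cong X\otimes I_K$ on $\ell^2_n\otimes K$, and evaluating the identity $T_\phi(X)=V^*(X\otimes I_K)W$ on the matrix units $e_{kl}$ and reading off the appropriate diagonal coefficients of $V$ and $W$ produces two families $(a_1(k))_k$ and $(a_2(l))_l$ in $K$ with $\phi(k,l)=(a_2(l)\,\vert\,a_1(k))_K$, $\sup_k\norm{a_1(k)}\le\norm{V}$ and $\sup_l\norm{a_2(l)}\le\norm{W}$, whence $\sup_k\norm{a_1(k)}\sup_l\norm{a_2(l)}\le\norm{V}\norm{W}=\cbnorm{T_\phi}=\norm{T_\phi}$. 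The general discrete case $\Omega=I$ then follows by a weak-$*$ compactness argument over finite subsets $J\subset I$, using that the restrictions $\phi_J$ have Schur norm at most $\norm{T_\phi}$ (see \cite{Pau,Pis}).

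Finally I would transfer to a $\sigma$-finite measure space by discretization: partitioning $\Omega$ into measurable sets of finite measure and conditioning produces discrete Schur multipliers whose norms are dominated by $\norm{T_\phi}$, and refining the partitions along a net yields the factorization in the limit. The main obstacle is exactly this last measurable passage: one must assemble the finite-dimensional data into genuinely measurable functions $a_1,a_2\in L^\infty(\Omega;K)$ valued in a single fixed Hilbert space $K$, while preserving the $L^\infty$ bounds in the limit. Controlling $K$ uniformly (for instance by a separability reduction or an ultraproduct of the finite-dimensional spaces) and extracting measurable selections for $a_1,a_2$ is the delicate technical point, and it is precisely the content supplied by Haagerup \cite{Haa} and Spronk \cite{Spronk}.
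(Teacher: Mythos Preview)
The paper does not actually prove this proposition: it is quoted as a known result, with references to Paulsen \cite[Corollary 8.8]{Pau} and Pisier \cite[Theorem 5.1]{Pis} for the discrete case and to Haagerup \cite{Haa} and Spronk \cite[Section 3.2]{Spronk} for the general measurable case. So there is no ``paper's own proof'' to compare against.

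Your sketch is a correct outline of how the argument in those references runs. The easy direction via $T_\phi(S)=M_{a_1}^*(S\otimes I_K)M_{a_2}$ is complete and gives both the $w^*$-continuous extension and the bound $\norm{T_\phi}\le\nu_2(\phi)$. For the converse, the finite discrete step (bimodule property $\Rightarrow$ $\norm{T_\phi}=\cbnorm{T_\phi}$, then Wittstock--Paulsen factorization and reading off the vectors from the matrix units) is exactly the content of \cite[Corollary 8.8]{Pau} and \cite[Theorem 5.1]{Pis}, and the passage to arbitrary index sets by compactness is standard. You are right that the genuinely delicate point is the measurable transfer, where one must produce \emph{measurable} $a_1,a_2$ in a fixed Hilbert space; your honest deferral to \cite{Haa,Spronk} for this is appropriate, and matches what the paper itself does. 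In short, your proposal supplies more detail than the paper while ultimately resting on the same external sources for the hard half.
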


\medskip
\subsection{Proof of Theorem \ref{2Main}}\label{Pf}
We consider two $\gamma$-bounded strongly measurable functions
$A,B\colon\Omega\to B(X)$.
Let $x\in X$ and $x^*\in X^*$ and let $ \Theta_{x,x^*}$ be defined by (\ref {2Theta}).
Let $\varphi\in L^2(\Omega)\otimes L^2(\Omega)$
and let $f,g\in L^2(\Omega)$. Our goal is to give an estimate of the integral
\begin{equation}\label{2J}
\Lambda:=\int_{\Omega^2} \Theta_{x,x^*}(s,t)\varphi(s,t) f(s)g(t)\, dsdt.
\end{equation}
Consider the operator $M_A$ 
induced by $A$, according to (\ref{2MA}). Using  (\ref{2Embed4}), 
we regard $M_A(f\otimes x)$ as an operator
$M_A(f\otimes x)\colon L^2(\Omega)\to X$. The latter is given by
$$
\bigl[M_A(f\otimes x)\bigr](k) =\int_\Omega k(s) f(s) A(s)x\, ds,\qquad k\in L^2(\Omega).
$$
Recall $S_\phi\colon L^2(\Omega)\to L^2(\Omega)$ given by (\ref{2S}).
Then by Fubini's theorem, we have
$$
\bigl[M_A(f\otimes x)\bigr]\bigl(S_\phi(h)\bigr)
=\int_{\Omega^2}\phi(s,t) h(t)f(s)A(s)x\, dtds,
\qquad h\in L^2(\Omega).
$$
We write $\phi=\sum_j\beta_j\otimes \alpha_j$, for some finite families
$(\alpha_j)_j$ and $(\beta_j)_j$ of $L^2(\Omega)$.  Then for any 
$h\in L^2(\Omega)$, we have
\begin{equation}\label{2Dev}
\bigl[M_A(f\otimes x)\bigr]\bigl(S_\phi(h)\bigr)
=\sum_j \Bigl(\int_\Omega \alpha_j(t)h(t)\,dt\Bigr)\Bigl(
\int_{\Omega}\beta_j(s)  f(s)A(s)x\, ds\Bigr).
\end{equation}
In particular, this shows that the operator
$\bigl[M_A(f\otimes x)\bigr]\circ S_\phi\colon
L^2(\Omega)\to X$ is finite rank, i.e. belongs 
to $L^2(\Omega)\otimes X$. We may therefore define
$$
\Gamma:= M_B\Bigl( \bigl[M_A(f\otimes x)\bigr]\circ S_\phi\Bigr)\colon L^2(\Omega) 
\longrightarrow
X,
$$
where again, we use the embedding (\ref{2Embed4}). Moreover by  Proposition \ref{2Mult}, 
$\Gamma$ belongs to the space
$\gamma_\infty(L^2(\Omega),X)$. 
Using  Proposition \ref{2Mult} and Proposition 
\ref{2Tensor}, we further  have
\begin{align*}
\norm{\Gamma}_\gamma 
& \leq \gamma(B)\bignorm{\bigl[M_A(f\otimes x)\bigr]\circ S_\phi}_\gamma\\
& \leq \gamma(B)\norm{S_\phi}_{B(L^2)}\norm{M_A(f\otimes x)}_\gamma\\
& \leq \gamma(A)\gamma(B)\norm{S_\phi}_{B(L^2)}\norm{f\otimes x}_\gamma.
\end{align*}
Finally using Lemma \ref{2Rk1}, we deduce that
$$
\norm{\Gamma}_\gamma\leq
\gamma(A)\gamma(B)\norm{S_\phi}_{B(L^2)}\norm{f}_2\norm{x}.
$$

It follows from (\ref{2Dev}) that as an element of $L^2(\Omega;X)$, 
$\Gamma$
is given by
$$
t\mapsto \sum_j \alpha_j(t) B(t)\Bigl(
\int_{\Omega}\beta_j(s)  f(s)A(s)x\, ds\Bigr).
$$
Consequently,
$$
\Gamma(g) =  \sum_j \int_\Omega \alpha_j(t) g(t) B(t)\Bigl(
\int_{\Omega}\beta_j(s)  f(s)A(s)x\, ds\Bigr)\, dt.
$$
Using again Fubini's theorem, we deduce that
\begin{align*}
\Gamma(g) 
& = \sum_j\int_{\Omega^2} 
\alpha_j(t) \beta_j(s) g(t) f(s) B(t) A(s)x\, ds dt\\
& = \int_{\Omega^2} 
\phi(s,t) g(t) f(s) B(t) A(s)x\, ds dt.
\end{align*}
Therefore,
$$
\langle \Gamma(g),x^*\rangle = \Lambda,
$$
where $\Lambda$ is given by (\ref{2J}). We deduce that
$\vert \Lambda\vert\leq \norm{g}_2\norm{x^*}\norm{\Gamma}_\gamma$, and hence
$$
\vert \Lambda\vert\leq \gamma(A)\gamma(B)\norm{x}\norm{x^*}
\norm{S_\phi}_{B(L^2)}\norm{f}_2\norm{g}_2.
$$
Since $L^2(\Omega)\otimes L^2(\Omega)$ is dense in
$L^2(\Omega^2)$, this implies that $\Theta_{x,x^*}$ is a bounded Schur multiplier
and that the norm of $T_{\Theta_{x,x^*}}\colon B(L^2(\Omega))\to B(L^2(\Omega))$
is less than or equal to $\gamma(A)\gamma(B)\norm{x}\norm{x^*}$.
Then the result follows from Proposition \ref{2Description}.

\section{Applications}\label{App}
Throughout this section, we let $X$ be an arbitrary Banach space.
We shall apply Theorem \ref{2Main} in the following cases:

\smallskip
- Either $\Omega$ is a locally compact group $G$ equiped with a left Haar measure, 
or $\Omega$ is the semigroup $\Rdb_+$ equiped with the Lebesgue measure, or
$\Omega$ is the semigroup $\Ndb=\{0,1,2,\ldots\}$ 
equiped with the counting measure. 

\smallskip
- The functions $A$ and $B$ are equal to a strongly continuous and $\gamma$-bounded 
multiplicative map $\Omega\to B(X)$.

\medskip
\subsection{Group representations}\label{Group}
Let $G$ be a locally compact group and let $dt$ denote a fixed left Haar measure on 
$G$. Let $\lambda\colon G\to B(L^2(G))$ denote the left regular representation of 
$G$, defined by 
$$
[\lambda(s)f](t)=f(s^{-1}t),\qquad t\in G,
$$
for all $s\in G$ and $f\in L^2(G)$. The group von Neumann algebra 
$VN(G)\subset B(L^2(G))$ is defined as the von Neumann algebra 
generated by $\{\lambda(s)\, :\, s\in G\}$.

A function $\psi\colon G\to \Cdb$ is called a bounded Fourier multiplier on 
$VN(G)$ if there exists a $w^*$-continuous operator
$M_\psi\colon VN(G)\to VN(G)$ such that $M_\psi(\lambda(s))=
\psi(s)\lambda(s)$ for all $s\in G$. In this case, the map 
$M_\psi$ is necessarily unique and the function $\psi$
is continuous and bounded, with $\norm{\psi}_\infty\leq \norm{M_\psi}$. 
We refer the reader to \cite{DCH}
for these facts and basic results on these Fourier multipliers.

We say that a bounded Fourier multiplier
$\psi$ is completely bounded if the associated  mapping $M_\psi\colon VN(G)\to VN(G)$ 
is completely bounded. We let 
$\M_0(G)$ denote the space of all completely bounded Fourier multipliers
on $VN(G)$ and we equip it with the norm
$$
\norm{\psi}_0 : = \bignorm{M_\psi\colon VN(G)\longrightarrow VN(G)}_{cb},
\qquad \psi\in \M_0(G).
$$
Here $\norm{\,\cdotp}_{cb}$ stands for the completely bounded norm. We refer to
\cite{Pau} for information on completely bounded maps.

Since $\M_0(G)\subset L^\infty(G)$ and $\norm{\psi}_\infty\leq 
\norm{\psi}_0$ for all $\psi\in \M_0(G)$, we may define
$$
\norm{f}_Q : =\sup\Bigl\{
\Bigl\vert\int_G f(t)\psi(t)\, dt\,\Bigr\vert\, :\,
\psi\in \M_0(G),\ \norm{\psi}_0\leq 1\Bigr\},
$$
for all $f\in L^1(G)$. This is a norm on $L^1(G)$. We let 
$Q(G)$ denote the completion of $(L^1(G),\norm{\,\cdotp}_Q)$. 
The following is proved in \cite[Proposition 1.10]{DCH}, see also
\cite[Section 6.2]{Spronk}.

\begin{proposition}\label{3Duality}
The duality pairing $(f,\psi)\mapsto \int f\psi$ on 
$L^1(G)\times \M_0(G)$ extends to a isometric identification
$$
\M_0(G)\simeq Q(G)^*.
$$
\end{proposition}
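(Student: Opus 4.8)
The plan is to read the statement as an instance of the bipolar theorem. By the very definition of $Q(G)$ as the completion of $(L^1(G),\norm{\,\cdotp}_Q)$ and the density of $L^1(G)$ in it, $Q(G)^*$ is isometrically the space of linear functionals on $L^1(G)$ that are bounded for $\norm{\,\cdotp}_Q$. Each $\psi\in\M_0(G)$ yields such a functional, $f\mapsto\int_G f\psi$, of norm at most $\norm{\psi}_0$ directly from the definition of $\norm{\,\cdotp}_Q$; this gives a contraction $J\colon\M_0(G)\to Q(G)^*$, which is injective because a bounded Fourier multiplier is a continuous function on $G$ and is hence determined by its class in $L^\infty(G)$. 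So everything reduces to showing that $J$ is onto and isometric. For this, first note that since $\norm{\psi}_\infty\le\norm{\psi}_0$ one has $\norm{f}_Q\le\norm{f}_1$; hence any $\omega\in Q(G)^*$, restricted to $L^1(G)$, is bounded for $\norm{\,\cdotp}_1$, so it is given by integration against a unique $\psi_\omega\in L^\infty(G)$, with $\norm{\omega}_{Q(G)^*}=\sup\{\,\vert\int_G f\psi_\omega\vert:\norm{f}_Q\le 1\,\}$. Thus $Q(G)^*$ embeds isometrically into $L^\infty(G)$, and the assertion ``$J$ onto and isometric'' is exactly: \emph{for $\psi\in L^\infty(G)$, one has $\psi\in\M_0(G)$ with $\norm{\psi}_0\le 1$ if and only if $\vert\int_G f\psi\vert\le 1$ whenever $\norm{f}_Q\le 1$.}

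Next I would put $U:=\{\psi\in\M_0(G):\norm{\psi}_0\le 1\}$, a convex balanced subset of $L^\infty(G)=L^1(G)^*$, and work in the dual pair $\langle L^1(G),L^\infty(G)\rangle$. The polar of $U$ in $L^1(G)$ is precisely $\{f:\norm{f}_Q\le 1\}$, so its bipolar is precisely $\{\psi\in L^\infty(G):\vert\int_G f\psi\vert\le 1\text{ for all }\norm{f}_Q\le 1\}$. By the bipolar theorem the bipolar coincides with the $\sigma(L^\infty(G),L^1(G))$-closed convex balanced hull of $U$, which equals $U$ provided $U$ is $\sigma(L^\infty(G),L^1(G))$-closed. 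Hence the proposition is equivalent to the single assertion that the closed unit ball of $\M_0(G)$ is weak-$*$ closed in $L^\infty(G)$, and this is the real content.

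To prove that, I would invoke the Bo\.zejko--Fendler transference together with Proposition \ref{2Description}: for a continuous $\psi\colon G\to\Cdb$, membership $\psi\in\M_0(G)$ with $\norm{\psi}_0\le 1$ is equivalent to the function $\tilde\psi(s,t):=\psi(st^{-1})$ being a bounded Schur multiplier of $B(L^2(G))$ of norm $\le 1$, that is, to $\tilde\psi\in\mathcal{V}_2(G^2)$ with $\nu_2(\tilde\psi)\le 1$, i.e.\ to the existence of a Hilbert space $K$ and $a_1,a_2\in L^\infty(G;K)$ with $\psi(st^{-1})=(a_2(t)\,\vert\,a_1(s))_K$ and $\norm{a_1}_\infty\norm{a_2}_\infty\le 1$. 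Given a weak-$*$ convergent net $\psi_i\to\psi$ with $\psi_i\in U$, one fixes such factorizations for the $\psi_i$ (into a common sufficiently large Hilbert space, or an ultraproduct), extracts weak-$*$ cluster points $a_1,a_2$ of $(a_1^i),(a_2^i)$ in the unit ball of $L^\infty(G;K)=L^1(G;K)^*$ (Hilbert spaces have the Radon--Nikodym property), and checks that $\psi(st^{-1})=(a_2(t)\,\vert\,a_1(s))_K$, whence $\psi\in U$. \emph{I expect this last verification to be the main obstacle:} the pairing $(a_1,a_2)\mapsto[(s,t)\mapsto(a_2(t)\,\vert\,a_1(s))]$ is bilinear but not jointly weak-$*$ continuous, so one cannot simply pass to the limit. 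The standard remedy, carried out in \cite{DCH} and \cite{Spronk}, is to exploit that the functions in $U$ are uniformly continuous and that $\norm{\,\cdotp}_0$ is lower semicontinuous for uniform convergence on compact subsets of $G$ (it is a supremum of Schur-norm tests over finite subsets, each of which depends on finitely many point evaluations); combined with an Arzel\`a--Ascoli argument for the ball of $\M_0(G)$ in the compact-open topology, and a reduction to second countable $G$ where bounded weak-$*$ convergence is metrizable and dominated convergence applies, this gives that a weak-$*$ cluster point of $(\psi_i)$ lies in $U$. An alternative resting on the same continuity input is to run the compactness argument on the maps $M_{\psi_i}$ in the unit ball of $CB(VN(G))$, which is weak-$*$ compact, and to identify a cluster point with $M_\psi$.
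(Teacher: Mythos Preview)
The paper does not prove this proposition; it is quoted from \cite[Proposition 1.10]{DCH}, with a further pointer to \cite[Section 6.2]{Spronk}. So there is no in-paper argument to compare against, and your proposal has to stand on its own.

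Your bipolar reduction is correct and isolates the real content: the closed unit ball $U$ of $\M_0(G)$ must be $\sigma(L^\infty,L^1)$-closed in $L^\infty(G)$. But neither of your two sketches for this closedness is complete. The first (extracting weak-$*$ cluster points of the factorization data $a_1^i,a_2^i$) fails for exactly the reason you name; an ultraproduct repair does produce a pointwise limit, but one must then still argue that it agrees a.e.\ with the $L^\infty$ weak-$*$ limit of the $\psi_i$, and you have not done so. The second (Arzel\`a--Ascoli on $U$ in the compact-open topology) requires \emph{equicontinuity} of the family $U$, not merely continuity of each member; this fails already on $G=\Rdb$, where the characters $t\mapsto e^{i\xi t}$ all lie in $U$ but are not equicontinuous as $\xi\to\infty$, so that route is a dead end as written. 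Your final sentence---running the compactness on the operators $M_{\psi_i}$ inside the unit ball of $CB(VN(G))$---points in the right direction and is close to how \cite{DCH} and \cite{Spronk} proceed, but here it is only a gesture: you still owe the verification that a point--weak-$*$ cluster point of $(M_{\psi_i})$ is again of the form $M_\psi$ for some continuous $\psi$, and that this $\psi$ represents the given weak-$*$ limit in $L^\infty(G)$. That identification is where the actual work lies, and it is not supplied in your proposal.
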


The following remarkable characterization of completely bounded Fourier multipliers
on $VN(G)$ follows from  \cite[Theorem 3.3]{Spronk} and 
the case $n=1$ of \cite[Theorem 5.3]{Spronk}. It can also be regarded as a combination
of \cite[Lemmas 1 and 2]{Herz}, \cite[Theorem]{BF} and Proposition \ref{2Description} above.

\begin{proposition}\label{3BF}
Let $\psi\colon G\to \Cdb$ be a continuous function and let $C\geq 0$ be a constant. The
following assertions are equivalent.
\begin{itemize}
\item [(i)] $\psi\in \M_0(G)$ and $\norm{\psi}_0\leq C$.
\item [(ii)] For any finite subset $J\subset G$, the family 
$\phi_J :=\{\psi(ts)\,:\, (s,t)\in J^2\}$, regarded as an element of
${\mathcal V}_2(J^2)=\ell^\infty_{J^2}$, satisfies
$$
\nu_2(\phi_J)\leq C.
$$
\end{itemize}
\end{proposition}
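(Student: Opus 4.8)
The plan is to deduce Proposition~\ref{3BF} by combining the two ingredients cited just before the statement: the functional-analytic description of $\M_0(G)$ through the predual $Q(G)$ (Proposition~\ref{3Duality}), together with the classical fact (Herz, Bo\.zejko--Fendler) that completely bounded Fourier multipliers coincide with the Herz--Schur multipliers, i.e.\ that $\psi\in\M_0(G)$ iff the function $(s,t)\mapsto\psi(ts^{-1})$ (or, after the obvious variable change, $(s,t)\mapsto\psi(ts)$) is a bounded Schur multiplier on $B(L^2(G))$ with the same norm. Proposition~\ref{2Description} then translates ``bounded Schur multiplier'' into ``belongs to $\mathcal V_2$ with controlled $\nu_2$.'' So the skeleton is: (i) $\Leftrightarrow$ the Herz--Schur function $\check\phi(s,t):=\psi(ts)$ lies in $\mathcal V_2(G\times G)$ with $\nu_2(\check\phi)\le C$ $\Leftrightarrow$ (ii), where the last equivalence is the reduction from the measurable/global $\mathcal V_2$-norm to the supremum of the discrete $\nu_2$-norms over finite subsets.

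First I would recall precisely the Herz--Schur identification: for a continuous $\psi$, one has $\psi\in\M_0(G)$ with $\norm{\psi}_0\le C$ exactly when the kernel $(s,t)\mapsto\psi(ts)$ is a bounded (measurable) Schur multiplier on $B(L^2(G))$ of norm $\le C$. This is the content of \cite{Herz}, \cite{BF} and \cite[Theorem 3.3]{Spronk}; I would simply invoke it. Applying Proposition~\ref{2Description} to $\Omega=G$ (with its left Haar measure, which is $\sigma$-finite when $G$ is, e.g., second countable; in general one restricts to $\sigma$-compact open subgroups, but I expect the paper works in a setting where this is not an issue) gives that this Schur-multiplier condition is equivalent to $\check\phi\in\mathcal V_2(G^2)$ with $\nu_2(\check\phi)\le C$. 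That handles (i)$\Leftrightarrow$ the global $\mathcal V_2$ statement.

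Next I would pass from the global statement to the finitary one of (ii). The ``only if'' direction is trivial: if $\check\phi=(a_2(t)\,\vert\,a_1(s))_K$ a.e.\ with $\norm{a_1}_\infty\norm{a_2}_\infty\le C$, then for each finite $J\subset G$, using continuity of $\psi$ one gets pointwise values $\psi(ts)=(a_2(t)\,\vert\,a_1(s))_K$ for $(s,t)\in J^2$ after adjusting $a_i$ on null sets — here continuity of $\psi$ is what lets us talk about honest pointwise values on a set of measure zero — so $\nu_2(\phi_J)\le\norm{a_1}_\infty\norm{a_2}_\infty\le C$. For the converse, assuming $\nu_2(\phi_J)\le C$ for all finite $J$, I would invoke the standard fact (again from \cite{Spronk}, or a weak-$*$ compactness/ultraproduct argument on the unit balls of $L^\infty(\Omega;K)$-type spaces) that the $\nu_2$-norm of a function on $\Omega^2$ is the supremum of the $\nu_2$-norms of its restrictions to finite subsets — this is precisely the statement that the Schur-multiplier norm on $B(\ell^2_I)$ is the sup over finite $J$ of the norms on $B(\ell^2_J)$, combined with continuity of $\psi$ to upgrade from ``finite subsets'' to ``the measurable function.'' Concretely: continuity of $\psi$ implies $\check\phi$ is continuous, so its essential-sup $\mathcal V_2$-behaviour is governed by its values on any dense countable subset, and one exhausts by finite subsets.

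The main obstacle is the last reduction, i.e.\ showing that controlling $\nu_2$ on all finite subsets forces $\check\phi\in\mathcal V_2(G^2)$ with the same bound. The natural route is: for each finite $J$ pick a Hilbert space $K_J$ and factorizations $a_1^J,a_2^J$ with $\norm{a_1^J}_\infty\norm{a_2^J}_\infty\le C$, normalize $\norm{a_1^J}_\infty=\norm{a_2^J}_\infty\le\sqrt C$, and take an ultraproduct (or weak-$*$ cluster point in a large ambient Hilbert space) along the net of finite subsets directed by inclusion to produce a single $K$ and bounded measurable $a_1,a_2$ on $G$ representing $\check\phi$ everywhere on $G\times G$; continuity of $\psi$ is used to check that the limiting factorization reproduces $\psi(ts)$ at every point, not merely on a dense set. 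One must be slightly careful about measurability of the limiting $a_i$ and about separability of $\Omega$ (replacing $G$ by a $\sigma$-compact open subgroup if needed, or working with $G$ second countable throughout Section~\ref{App}, which is the relevant case for the applications). I would cite \cite[Section 3.2]{Spronk} for the clean form of this ``local-to-global'' principle for $\mathcal V_2$ rather than reprove it.
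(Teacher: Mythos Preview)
Your proposal is correct and aligns with the paper's treatment: the paper does not give its own proof of Proposition~\ref{3BF} but attributes it to \cite[Theorems 3.3 and 5.3]{Spronk}, or equivalently to the combination of \cite{Herz}, \cite{BF} and Proposition~\ref{2Description}---which is precisely the route you sketch. Two small remarks: Proposition~\ref{3Duality} plays no role here (you mention it in your opening plan but, correctly, never actually invoke it), and for (i)$\Rightarrow$(ii) the Bo\.zejko--Fendler theorem already yields a \emph{pointwise} factorization $\psi(ts)=(a_2(t)\,\vert\,a_1(s))_K$ with bounded $a_i$, so the null-set adjustment you worry about is unnecessary.
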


Let $\pi\colon G\to B(X)$ be a map. We say that $\pi$ is a representation if $\pi(ts)=\pi(t)\pi(s)$
for all $s,t\in G$ and we say that $\pi$ is bounded if there exists a constant $C\geq 0$
such that $\norm{\pi(t)}\leq C$ for all $t\in G$. We further say that
$\pi$ is strongly continuous if $t\mapsto \pi(t)x$
is continuous (from $G$ into $X$) for all $x\in X$. 

To any bounded strongly continuous representation $\pi\colon G\to B(X)$, one may 
associate a bounded map $\sigma_\pi\colon L^1(G)\to B(X)$ defined by
\begin{equation}\label{3Sigma}
\sigma_\pi(f)=\int_G f(t)\pi(t)\, dt,\qquad f\in L^1(G),
\end{equation}
where the above integral is defined in the strong sense. We note that if $L^1(G)$ is equipped with
convolution, then $\sigma_\pi$ is a homomorphism.

The main result of this subsection is the following.

\begin{theorem}\label{3Gr1} Let $\pi\colon G\to B(X)$ be a $\gamma$-bounded 
strongly continuous representation.
Then the mapping $\sigma_\pi$, defined by (\ref{3Sigma}), uniquely extends to a bounded map
$\sigma_\pi^{Q}\colon Q(G)\to B(X)$, and we have
$$
\norm{\sigma_\pi^{Q}}
\leq\gamma(\pi)^2.
$$
\end{theorem}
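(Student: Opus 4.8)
The plan is to deduce the theorem from a single coefficient estimate: for every $x\in X$ and $x^*\in X^*$, the function $\psi_{x,x^*}\colon G\to\Cdb$ given by $\psi_{x,x^*}(t)=\langle\pi(t)x,x^*\rangle$ lies in $\M_0(G)$ with $\norm{\psi_{x,x^*}}_0\leq\gamma(\pi)^2\norm{x}\norm{x^*}$. Once this is known, the extension is immediate: for $f\in L^1(G)$ one has $\langle\sigma_\pi(f)x,x^*\rangle=\int_G f(t)\psi_{x,x^*}(t)\,dt$, so the definition of $\norm{\,\cdotp}_Q$ gives
$$
\bigl|\langle\sigma_\pi(f)x,x^*\rangle\bigr|\leq\norm{f}_Q\,\norm{\psi_{x,x^*}}_0\leq\gamma(\pi)^2\,\norm{f}_Q\,\norm{x}\,\norm{x^*},
$$
and taking the supremum over the unit balls of $X$ and $X^*$ yields $\norm{\sigma_\pi(f)}_{B(X)}\leq\gamma(\pi)^2\norm{f}_Q$. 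Since $L^1(G)$ is dense in $Q(G)$ by construction, $\sigma_\pi$ then extends uniquely to a bounded map $\sigma_\pi^{Q}\colon Q(G)\to B(X)$ with $\norm{\sigma_\pi^{Q}}\leq\gamma(\pi)^2$.

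To prove the coefficient estimate I would first record that $\psi_{x,x^*}$ is continuous, since $t\mapsto\pi(t)x$ is continuous by strong continuity of $\pi$, and bounded, since a $\gamma$-bounded representation is in particular bounded. Then I would apply Theorem \ref{2Main} with $\Omega=G$ (equipped with a left Haar measure) and $A=B=\pi$: strong continuity gives strong measurability, $\gamma$-boundedness is assumed, and multiplicativity gives $B(t)A(s)=\pi(t)\pi(s)=\pi(ts)$. Hence the function $\Theta_{x,x^*}$ of (\ref{2Theta}) is exactly $\Theta_{x,x^*}(s,t)=\psi_{x,x^*}(ts)$, and Theorem \ref{2Main} yields $\Theta_{x,x^*}\in\mathcal{V}_2(G^2)$ with $\nu_2(\Theta_{x,x^*})\leq\gamma(\pi)^2\norm{x}\norm{x^*}$.

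Finally I would feed this into Proposition \ref{3BF}. For a finite subset $J\subset G$, the family $\phi_J=\{\psi_{x,x^*}(ts)\}_{(s,t)\in J^2}$ is precisely the restriction of $\Theta_{x,x^*}$ to $J^2$; restricting any factorization $\Theta_{x,x^*}(s,t)=(a_2(t)\,\vert\,a_1(s))_K$ to $J\times J$ and using $\norm{a_i|_J}_\infty\leq\norm{a_i}_\infty$ shows $\nu_2(\phi_J)\leq\nu_2(\Theta_{x,x^*})\leq\gamma(\pi)^2\norm{x}\norm{x^*}$. By the implication (ii)$\Rightarrow$(i) of Proposition \ref{3BF}, applied with $C=\gamma(\pi)^2\norm{x}\norm{x^*}$ (and here the continuity of $\psi_{x,x^*}$ is used), we obtain $\psi_{x,x^*}\in\M_0(G)$ with $\norm{\psi_{x,x^*}}_0\leq\gamma(\pi)^2\norm{x}\norm{x^*}$, which is the required estimate.

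I do not anticipate a genuine obstacle here: all the substantive work is already carried by Theorem \ref{2Main} and the Bo\.{z}ejko--Fendler/Spronk characterization in Proposition \ref{3BF}. The only points needing a little care are the (routine) monotonicity of $\nu_2$ under restriction to subsets, the bookkeeping identifying $\langle\sigma_\pi(f)x,x^*\rangle$ with the pairing of $f$ against $\psi_{x,x^*}$, and checking that the convention $(s,t)\mapsto ts$ appearing in Theorem \ref{2Main} (via $B(t)A(s)$) matches the one in Proposition \ref{3BF}, which it does.
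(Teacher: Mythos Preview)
Your overall strategy is exactly the paper's: show that each coefficient function $\psi_{x,x^*}$ lies in $\M_0(G)$ with norm at most $\gamma(\pi)^2\norm{x}\norm{x^*}$ via Theorem~\ref{2Main} and Proposition~\ref{3BF}, then dualize. The final deduction of the bound on $\sigma_\pi$ and the extension to $Q(G)$ is identical.

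There is, however, a genuine gap in your passage from $\Theta_{x,x^*}\in\mathcal V_2(G^2)$ to $\nu_2(\phi_J)\leq\nu_2(\Theta_{x,x^*})$. The factorization (\ref{2Factor}) produced by Theorem~\ref{2Main} holds only for almost every $(s,t)\in G^2$ with respect to Haar measure, and $a_1,a_2$ are merely elements of $L^\infty(G;K)$, i.e.\ equivalence classes. When $G$ is non-discrete, a finite set $J\subset G$ has Haar measure zero, so ``restricting $a_i$ to $J$'' is meaningless, and the inequality $\norm{a_i|_J}_\infty\leq\norm{a_i}_\infty$ has no content. Thus the monotonicity of $\nu_2$ under restriction to subsets, which you flag as routine, actually fails in this measure-theoretic setting.

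The paper sidesteps this cleanly: instead of applying Theorem~\ref{2Main} once with $\Omega=G$ and then restricting, it applies Theorem~\ref{2Main} separately for each finite $J$, taking $\Omega=J$ with counting measure and $A=B=\pi|_J$. Since $\E(\pi|_J)\subset\E(\pi)$ one has $\gamma(\pi|_J)\leq\gamma(\pi)$, and Theorem~\ref{2Main} gives $\nu_2(\phi_J)\leq\gamma(\pi)^2\norm{x}\norm{x^*}$ directly, with a genuine pointwise factorization on the finite set $J$. Your argument is easily repaired by making exactly this change.
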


\begin{proof}
Let $f\in L^1(G)$. Consider any $x\in X$
and $x^*\in X^*$ and define $\psi\colon G\to \Cdb$ by
$$
\psi(s)=\langle \pi(s)x,x ^*\rangle,\qquad s\in G.
$$
This is a continuous function. 
Fix some finite subset $J\subset G$ and consider $\phi_J$ as in Proposition \ref{3BF}.
Note that we have $\psi(ts)=\langle \pi(t)\pi(s) x,x ^*\rangle$, for all $s,t\in J$.
Hence by Theorem \ref{2Main}, the $\nu_2$-norm of $\phi_J$ is less than or equal to
$\gamma(\pi)^2\norm{x}\norm{x^*}$. 
By Proposition \ref{3BF}, this implies that $\psi\in\M_0(G)$, with
$\norm{\psi}_0\leq\gamma(\pi)^2\norm{x}\norm{x^*}$. 
We may write
$$
\langle \sigma_\pi(f)x,x^*\rangle = \int_G f(s)\psi(s)\, ds.
$$
Appying Proposition \ref{3Duality}, we deduce that 
$$
\bigl\vert\langle \sigma_\pi(f)x,x^*\rangle\bigr\vert
\leq \norm{f}_Q\norm{\psi}_0\leq
\norm{f}_Q\gamma(\pi)^2\norm{x}\norm{x^*}.
$$
Since $x,x^*$ were arbitrary,
this implies that $\norm{\sigma_\pi(f)}\leq \norm{f}_Q\gamma(\pi)^2$,
and the result follows.
\end{proof}

Recall that the reduced $C^*$-algebra of $G$ is defined as
$$
C_\lambda^*(G) :=\overline{\sigma_\lambda(L^1(G))}\subset B(L^2(G)).
$$
It is well-known that if $G$ is amenable, then 
$Q(G)\simeq C_\lambda^*(G)$ isometrically (this follows e.g. from \cite[Corollary 1.8]{DCH}). More precisely, we have
$$
\norm{f}_Q=\norm{\sigma_\lambda(f)},\qquad f\in L^1(G).
$$

Combining this result with Theorem \ref{3Gr1}, we immediately deduce the following result,
which is the main result of \cite{LM1} (see Theorem 4.4 in the latter paper).

\begin{corollary}\label{3Gr2} Let  $G$ be an amenable locally compact group and 
let $\pi\colon G\to B(X)$ be a $\gamma$-bounded strongly continuous representation. Then there
exists a (necessarily unique) bounded operator
$$
w_\pi\colon C^*_\lambda(G)\longrightarrow B(X)
$$
such that $w_\pi\circ \sigma_\lambda = \sigma_\pi$. Moreover 
$w_\pi$ is a homomorphism and $\norm{w_\pi}\leq \gamma(\pi)^2$.
\end{corollary}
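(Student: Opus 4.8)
The plan is to obtain the corollary as a repackaging of Theorem \ref{3Gr1}, using the isometric identification $Q(G)\simeq C^*_\lambda(G)$ that holds precisely because $G$ is amenable. First I would record what this identification says at the level of the maps involved: for amenable $G$ one has $\norm{\sigma_\lambda(f)}=\norm{f}_Q$ for every $f\in L^1(G)$, and since $\sigma_\lambda(L^1(G))$ is by definition dense in $C^*_\lambda(G)$, the map $\sigma_\lambda$ extends to an isometric isomorphism $\kappa\colon Q(G)\longrightarrow C^*_\lambda(G)$ whose restriction to $L^1(G)\subset Q(G)$ is $\sigma_\lambda$ itself. This is the only place amenability enters.

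Next I would apply Theorem \ref{3Gr1} to the $\gamma$-bounded strongly continuous representation $\pi$, which produces a bounded map $\sigma_\pi^{Q}\colon Q(G)\to B(X)$ with $\norm{\sigma_\pi^{Q}}\leq\gamma(\pi)^2$ and $\sigma_\pi^{Q}|_{L^1(G)}=\sigma_\pi$. I would then set $w_\pi:=\sigma_\pi^{Q}\circ\kappa^{-1}\colon C^*_\lambda(G)\to B(X)$. Since $\kappa$ is isometric, $\norm{w_\pi}=\norm{\sigma_\pi^{Q}}\leq\gamma(\pi)^2$. For $f\in L^1(G)$ one has $\kappa^{-1}(\sigma_\lambda(f))=f$ inside $Q(G)$, hence $w_\pi(\sigma_\lambda(f))=\sigma_\pi^{Q}(f)=\sigma_\pi(f)$, i.e.\ $w_\pi\circ\sigma_\lambda=\sigma_\pi$. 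Uniqueness is immediate: any bounded $w$ with $w\circ\sigma_\lambda=\sigma_\pi$ is determined on the dense subspace $\sigma_\lambda(L^1(G))$, hence on all of $C^*_\lambda(G)$.

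It then remains to see that $w_\pi$ is a homomorphism. Recall that $\sigma_\pi$ and $\sigma_\lambda$ are both homomorphisms for convolution on $L^1(G)$, so for $f,g\in L^1(G)$,
$$
w_\pi\bigl(\sigma_\lambda(f)\sigma_\lambda(g)\bigr)=w_\pi\bigl(\sigma_\lambda(f*g)\bigr)=\sigma_\pi(f*g)=\sigma_\pi(f)\sigma_\pi(g)=w_\pi(\sigma_\lambda(f))\,w_\pi(\sigma_\lambda(g)).
$$
Thus the two bounded bilinear maps $(a,b)\mapsto w_\pi(ab)$ and $(a,b)\mapsto w_\pi(a)w_\pi(b)$ on $C^*_\lambda(G)\times C^*_\lambda(G)$ agree on the dense set $\sigma_\lambda(L^1(G))\times\sigma_\lambda(L^1(G))$; since multiplication in the Banach algebra $C^*_\lambda(G)$ is jointly norm-continuous and $w_\pi$ is bounded, they agree everywhere, so $w_\pi$ is multiplicative. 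There is no serious obstacle in this argument; the only points needing a little care are checking that the amenability identification $Q(G)\simeq C^*_\lambda(G)$ is genuinely implemented by (the extension of) $\sigma_\lambda$, so that the relation $w_\pi\circ\sigma_\lambda=\sigma_\pi$ falls out, and then the routine density-plus-continuity passage for the homomorphism property.
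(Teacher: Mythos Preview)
Your proposal is correct and follows exactly the approach the paper takes: the paper simply records that amenability gives $\norm{f}_Q=\norm{\sigma_\lambda(f)}$, hence $Q(G)\simeq C^*_\lambda(G)$ isometrically, and then says the corollary follows immediately from Theorem \ref{3Gr1}. Your write-up merely spells out the routine details (defining $w_\pi=\sigma_\pi^{Q}\circ\kappa^{-1}$, checking uniqueness, and passing the homomorphism identity through density) that the paper leaves implicit.
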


\medskip
\subsection{Bounded $C_0$-semigroups}\label{Semigroup1}
Let $\T=(T_t)_{t\geq 0}$ be a bounded $C_0$-semigroup on $X$.
To any $b\in L^1(\Rdb_+)$, one may associate 
an operator $\Gamma(\T,b)\in B(X)$ defined by
$$
\Gamma(\T,b) = \int_{0}^{\infty} b(t)T_t\, dt,
$$
where the above integral is defined in the strong sense. It is plain that
\begin{equation}\label{3Rough}
\norm{\Gamma(\T,b)}\leq\bigl(\sup_{t\geq 0}\norm{T_t}\bigr)\norm{b}_1,
\qquad b\in L^1(\Rdb_+). 
\end{equation}
The mapping
$b\mapsto \Gamma(\T,b)$ is called the Hille-Phillips functional calculus of $\T$. We refer 
the reader to \cite{HP, Paz} and to \cite[Section 3.3]{Haase}
for basics on $C_0$-semigroups and on the Hille-Phillips functional calculus.

In the recent paper \cite{ALZ}, L. Arnold, S. Zadeh and the author showed that
if $X$ is a Hilbert space, then the rough estimate (\ref{3Rough}) can be 
significantly improved, which yields a new functional calculus for generators
of bounded $C_0$-semigroups on Hilbert space (see also \cite{AL}). 
Our aim is to extend the main functional calculus result from \cite{ALZ}
to $\gamma$-bounded $C_0$-semigroups on Banach spaces.

We need definitions from \cite{ALZ}. For consistancy
we will use the notations from this paper, with only one
exception: the above operator $\Gamma(\T,b)$ is called
$\Gamma(A,b)$ in \cite{ALZ}, where $-A$ is the infinitesimal 
generator of $\T$. Indeed, the Hille-Phillips functional calculus of $\T$  can be regarded
as an elementary holomorphic functional calculus associated with $A$,
see \cite[Section 3.3]{Haase} and \cite{AL, ALZ} for more on this.

We let  $S^\infty$ (resp. $S^1$) denote the space of all compact 
operators (resp. all trace class operators) on $\ell^2$. Next we let 
$C_0(\Rdb;S^\infty)$ denote the space of all continuous functions
$f\colon \Rdb\to S^\infty$ which vanish at $\infty$. This is a
Banach space for the norm $\norm{f}_\infty=\sup\{\norm{f(t)}_{S^\infty}\, :\,
t\in\Rdb\}$. Finally we let $H^1(\Rdb;S^1)$ be the Hardy space 
of all $h\in L^1(\Rdb;S^1)$ whose Fourier transform vanishes on $\Rdb_-$. This 
is a closed subspace of $L^1(\Rdb;S^1)$, hence a Banach space for the
induced norm $\norm{\,\cdotp}_1$.

For any $f\in C_0(\Rdb;S^\infty)$ and $h\in H^1(\Rdb;S^1)$, we let $f\ast h\in C_0(\Rdb)$
be  defined by
$$
(f\ast h)(s) =\int_{-\infty}^{\infty} tr\bigl(f(t)h(s-t)\bigr)\, dt,\qquad
s\in\Rdb,
$$
where ${\rm tr}$ denotes the usual trace
on $B(\ell^2)$.
It is plain that $\norm{f\ast h}_\infty\leq\norm{f}_\infty\norm{h}_1$.

We let $\mathcal{A}_{0,S^1}(\Rdb)$ 
be the space of all functions $F
\colon \Rdb \rightarrow \Cdb$ such that there 
exist two sequences $(f_k)_{k\geq 1}$ in 
$C_0(\Rdb;S^\infty)$  
and $(h_k)_{k\geq 1}$  in $H^1(\Rdb;S^1)$ satisfying
\begin{equation}\label{DefA0}
\sum_{k=1}^{\infty} \norm{f_k}_\infty\norm{h_k}_1 < \infty
\qquad\hbox{and}\qquad
F(s) = \sum_{k=1}^{\infty} f_k\ast h_k(s),\quad s\in\Rdb.
\end{equation}
For such a function $F$, we 
set
\begin{equation}\label{DefNormA0}
\norm{F}_{{\mathcal A}_{0,S^1}} = \inf \Bigl\{\sum_{k=1}^{\infty} 
\norm{f_k}_\infty\norm{h_k}_1 \Bigr\},
\end{equation}
where the infimum runs over all sequences  $(f_k)_{k\geq 1}$ in 
$C_0(\Rdb;S^\infty)$  
and $(h_k)_{k\geq 1}$  in $H^1(\Rdb;S^1)$ satisfying (\ref{DefA0}). 
It is clear that $\A_{0,S^1}(\Rdb)\subset C_0(\Rdb)$ and that 
$\norm{F}_\infty\leq \norm{F}_{{\mathcal A}_{0,S^1}}$ for all $F \in \A_{0,S^1}(\Rdb)$.
It is shown in \cite[Proposition 4.1]{ALZ} that 
$\A_{0,S^1}(\Rdb)$ is a Banach algebra for the pointwise multiplication.

For any $b\in L^1(\Rdb)$, let $\widehat{b}\in C_0(\Rdb)$ denote the Fourier
transform of $b$ and let $\widehat{b}(-\,\cdotp)$ denote the function
taking $s$ to  $\widehat{b}(-s)$ for all $s\in\Rdb$. According
to \cite[Lemma 4.4]{ALZ},  the function  $\widehat{b}(-\,\cdotp)$ belongs to 
$\A_{0,S^1}(\Rdb)$ for all $b\in L^1(\Rdb_+)$ and moreover,
$$
\bigl\{\widehat{b}(-\,\cdotp)\, :\, b\in L^1(\Rdb_+)\bigr\}\subset 
\A_{0,S^1}(\Rdb)
$$
is a dense subspace.

We say that  $\T=(T_t)_{t\geq 0}$ is $\gamma$-bounded if the set
$\{T_t\, :\, t\geq 0\}$ is $\gamma$-bounded.  In accordance with Definition \ref{Gamma-app},
we set
$$
\gamma(\T)=\gamma\bigl(\{T_t\, :\, t\geq 0\}\bigr)
$$
in this case. The following result is a generalization of \cite[Theorem 5.2]{ALZ}. We recover the latter by applying
Remark \ref{2Hilb}.

\begin{theorem}\label{3ALZ}
Let $\T=(T_t)_{t\geq 0}$ be a $\gamma$-bounded $C_0$-semigroup on $X$. 
Then there exists a (necessarily unique) bounded homomorphism
$$
\rho_{\mathcal T} \colon \A_{0,S^1}(\Rdb)\longrightarrow B(X)
$$
such that 
$$
\rho_{\mathcal T} \bigl(\widehat{b}(-\,\cdotp)\bigr) =
\int_{0}^\infty b(t)T_t\,dt
$$
for all $b\in L^1(\Rdb_+)$. Moreover,
$\norm{\rho_{\mathcal T}}\leq \gamma(\T)^2$.
\end{theorem}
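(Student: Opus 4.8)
The plan is to mirror the proof of Theorem~\ref{3Gr1}, with the roles of Propositions~\ref{3Duality} and~\ref{3BF} now played by the duality description of $\A_{0,S^1}(\Rdb)$ and its attached $\nu_2$-characterization, both available in \cite{ALZ}.

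First I would reduce the statement to the single norm estimate
$$
\norm{\Gamma(\T,b)}\,\leq\,\gamma(\T)^2\,\bignorm{\widehat{b}(-\,\cdotp)}_{\A_{0,S^1}},\qquad b\in L^1(\Rdb_+).
$$
Since $\{\widehat{b}(-\,\cdotp)\,:\,b\in L^1(\Rdb_+)\}$ is dense in $\A_{0,S^1}(\Rdb)$ and the assignment $\widehat{b}(-\,\cdotp)\mapsto\Gamma(\T,b)$ is well defined (the Fourier transform being injective on $L^1(\Rdb_+)$), this estimate produces a unique bounded extension $\rho_{\mathcal T}\colon\A_{0,S^1}(\Rdb)\to B(X)$ with $\norm{\rho_{\mathcal T}}\leq\gamma(\T)^2$. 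For the homomorphism property I would note that on the dense subalgebra one has $\widehat{b_1}(-\,\cdotp)\,\widehat{b_2}(-\,\cdotp)=\widehat{b_1\ast b_2}(-\,\cdotp)$, while Fubini's theorem together with $T_{u+v}=T_uT_v$ gives $\Gamma(\T,b_1\ast b_2)=\Gamma(\T,b_1)\Gamma(\T,b_2)$; hence $\rho_{\mathcal T}$ is multiplicative on a dense subalgebra and, by boundedness and joint continuity of multiplication, on all of $\A_{0,S^1}(\Rdb)$, and uniqueness is clear by density.

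To prove the estimate, I would fix $x\in X$, $x^*\in X^*$ and consider the bounded continuous function $\psi(t):=\langle T_tx,x^*\rangle$ on $\Rdb_+$. Applying Theorem~\ref{2Main} with $\Omega=\Rdb_+$ and $A=B$ the strongly continuous, $\gamma$-bounded map $t\mapsto T_t$, the kernel $(s,t)\mapsto\langle T_tT_sx,x^*\rangle=\psi(s+t)$ lies in $\mathcal{V}_2(\Rdb_+^2)$ with $\nu_2\leq\gamma(\T)^2\norm{x}\norm{x^*}$. By the semigroup counterpart of Proposition~\ref{3BF} — in which, combining the duality description of $\A_{0,S^1}(\Rdb)^*$ from \cite{ALZ} with Proposition~\ref{2Description}, membership of a function in that dual is detected through the $\nu_2$-norm of the associated Hankel-type kernel $(s,t)\mapsto\psi(s+t)$ — it then follows that $\psi$ induces a functional on $\A_{0,S^1}(\Rdb)$ of norm at most $\gamma(\T)^2\norm{x}\norm{x^*}$ which, on the dense subspace, acts by $\widehat{b}(-\,\cdotp)\mapsto\int_0^\infty b(t)\psi(t)\,dt=\langle\Gamma(\T,b)x,x^*\rangle$. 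Taking the supremum over $x$, $x^*$ in the unit balls yields the displayed estimate.

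The main obstacle is the duality step: one needs the precise description of $\A_{0,S^1}(\Rdb)^*$ in terms of $\mathcal{V}_2$-kernels of Hankel type, together with the verification that the functional attached to $\psi$ is compatible with the Hille--Phillips calculus, i.e. that it sends $\widehat{b}(-\,\cdotp)$ to $\int_0^\infty b\psi$; this compatibility is an interchange-of-integration argument of the type carried out in \cite{ALZ}. The remaining ingredients — density of $\{\widehat{b}(-\,\cdotp)\}$ in $\A_{0,S^1}(\Rdb)$, its Banach-algebra structure, and the identity $\Gamma(\T,b_1\ast b_2)=\Gamma(\T,b_1)\Gamma(\T,b_2)$ — are routine and already recorded in \cite{ALZ}.
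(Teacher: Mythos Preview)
Your proposal is correct and follows essentially the same route as the paper: reduce to the estimate $\norm{\Gamma(\T,b)}\leq\gamma(\T)^2\bignorm{\widehat{b}(-\,\cdotp)}_{\A_{0,S^1}}$, fix $x,x^*$, apply Theorem~\ref{2Main} to see that the Hankel-type kernel $(s,t)\mapsto\langle T_tT_sx,x^*\rangle$ lies in $\mathcal{V}_2(\Rdb_+^2)$ with the right bound, and then invoke the results of \cite{ALZ} identifying $\A_{0,S^1}(\Rdb)^*\simeq\M_{S^1}(H^1(\Rdb))$ together with the characterization $\norm{m}_{\M_{S^1}}=\nu_2(\phi_m)$ (this pair of results is exactly the ``semigroup counterpart of Proposition~\ref{3BF}'' you allude to). The only difference is cosmetic: the paper names the dual space $\M_{S^1}(H^1(\Rdb))$ explicitly and cites the precise equalities (\ref{3Dual}) and (\ref{3m}), whereas you package them as a single duality step; and you spell out the homomorphism and uniqueness arguments, which the paper leaves implicit.
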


Before proceeding to the proof, we
need some more background from \cite{ALZ}.
We say that a function $m\in L^\infty(\Rdb_+)$ is a bounded Fourier multiplier on 
$H^1(\Rdb)$ if there exists a (necessarily unique) bounded operator $R_m\colon H^1(\Rdb)\to H^1(\Rdb)$
such that $\widehat{R_m(h)} = m\widehat{h}$ on $\Rdb_+$, for all $h\in H^1(\Rdb)$. We further say
that $m$ is an $S^1$-bounded Fourier multiplier if $R_m\otimes I_{S^1}\colon H^1(\Rdb)\otimes S^1\to H^1(\Rdb)\otimes S^1$
extends to a bounded operator $R_m\overline{\otimes} I_{S^1}\colon H^1(\Rdb;S^1)\to  H^1(\Rdb;S^1)$.
The space $\M_{S^1}(H^1(\Rdb))$ of all  $S^1$-bounded Fourier multipliers $m$ on 
$H^1(\Rdb)$ is a Banach space for the norm $\norm{m}_{{\mathcal M}_{S^1}} :
= \norm{R_m\overline{\otimes} I_{S^1}\colon H^1(\Rdb;S^1)\to  H^1(\Rdb;S^1)}$.
It is proved in \cite[Section 4]{ALZ} that the duality pairing
$$
\bigl\langle \widehat{b}(-\,\cdotp), m\bigr\rangle : = \int_{0}^{\infty} b(t)m(t)\, dt,
\qquad b\in L^1(\Rdb_+),\ m\in \M_{S^1}(H^1(\Rdb)),
$$
extends to an isometric isomorphism
\begin{equation}\label{3Dual}
\M_{S^1}(H^1(\Rdb))\simeq \A_{0,S^1}(\Rdb)^*.
\end{equation}
Indeed, this follows from Proposition 4.2  and Lemma 4.4 in \cite{ALZ}.

Another key result stated as \cite[Theorem 3.1]{ALZ} is that a function $m\in L^\infty(\Rdb_+)$
belongs to $\M_{S^1}(H^1(\Rdb))$ if and only if the function
\begin{equation}\label{3phim}
\phi_m\colon\Rdb_+^2\longrightarrow \Cdb,\qquad \phi_m(s,t)= m(s+t),
\end{equation}
belongs to ${\mathcal V}_2(\Rdb_+^2)$ and that in this case, we have
\begin{equation}\label{3m}
\norm{m}_{{\mathcal M}_{S^1}} = \nu_2(\phi_m).
\end{equation}

\begin{proof}[Proof of Theorem \ref{3ALZ}]
It clearly suffices to show that for any $b\in L^1(\Rdb_+)$, we have
\begin{equation}\label{3Est}
\bignorm{\Gamma(\T,b)}\leq \gamma(\T)^2\bignorm{\widehat{b}(-\,\cdotp)}_{{\mathcal A}_{0,S^1}}.
\end{equation}
Consider $b\in L^1(\Rdb_+)$ and let $x\in X$ and $x^*\in X^*$. Set 
$$
m(t)=\langle T_t(x),x^* \rangle,\qquad t\geq 0,
$$
and define $\phi_m$ as in (\ref{3phim}). Then  we have
$$
\phi_m(s,t) = \langle T_t T_s(x),x^*\rangle,
\qquad s,t\geq 0.
$$
Let us apply 
Theorem \ref{2Main} with $A,B\colon\Rdb_+\to B(X)$ defined by 
$A(t)=B(t)=T_t$. We obtain that $\phi_m$ belongs to ${\mathcal V}_2(\Rdb_+^2)$ and
that
$\nu_2(\phi_m)\leq \gamma(\T)^2\norm{x}\norm{x^*}$. Consequently,
$m$ belongs to $\M_{S^1}(H^1(\Rdb))$ and
$\norm{m}_{{\mathcal M}_{S^1}}\leq \gamma(\T)^2\norm{x}\norm{x^*}$, by (\ref{3m}).

We may write 
$$
\bigl\langle \bigl[\Gamma(\T,b)\bigr](x) , x^*\bigr\rangle = \int_{0}^\infty b(t) m(t)\,dt.
$$
Using (\ref{3Dual}), we obtain
\begin{align*} 
\bigl\vert \bigl\langle \bigl[\Gamma(\T,b)\bigr](x) , x^*\bigr\rangle\bigr\vert & \leq 
\bignorm{\widehat{b}(-\,\cdotp)}_{{\mathcal A}_{0,S^1}}\norm{m}_{{\mathcal M}_{S^1}}\\
& \leq 
\bignorm{\widehat{b}(-\,\cdotp)}_{{\mathcal A}_{0,S^1}}\gamma(\T)^2\norm{x}\norm{x^*}.
\end{align*}
Since $x,x^*$ were arbitrary, this yields (\ref{3Est}).
\end{proof}

\medskip
\subsection{Power bounded operators}\label{PB}
We will finally establish a discrete analogue of Theorem \ref{3ALZ},
which slightly improves \cite[Section 4.5]{A}. The arguments are similar 
to the ones in Subsection \ref{Semigroup1} so we will be brief.

Let $\Tdb\simeq \Rdb/2\pi\Zdb$ denote the unit circle, equipped with its normalized
Haar measure.
For any $n\in\Zdb$, let $e_n\in C(\Tdb)$ be defined by $e_n(t)=e^{int}$ and let 
$$
\P={\rm Span}\{e_n\, :\, n\geq 0\}
$$
be the space of analytic polynomials. 
Let $C(\Tdb;S^\infty)$ denote the space of all continuous functions from $\Tdb$ into $S^\infty$,
equipped with the supremum norm $\norm{\,\cdotp}_\infty$. For any 
$h\in L^1(\Tdb;S^1)$, let $\widehat{h}(n)$, $n\in\Zdb$, denote the Fourier coefficients 
of $h$. The Hardy space $H^1(\Tdb;S^1)$ is the closed subspace 
of all $h\in L^1(\Tdb;S^1)$ such that $\widehat{h}(n)=0$ for all
$n\leq -1$, equipped with the 
$L^1(\Rdb;S^1)$-norm $\norm{\,\cdotp}_1$.  
For any $f\in C(\Tdb;S^\infty)$ and $h\in H^1(\Tdb;S^1)$, we let $f\ast h\in C(\Tdb)$
be  defined by
$$
(f\ast h)(s) =\,\frac{1}{2\pi}\, \int_{-\pi}^{\pi} tr\bigl(f(t)h(s-t)\bigr)\, dt,\qquad
s\in\Rdb.
$$
Then we have $\norm{f\ast h}_\infty\leq\norm{f}_\infty\norm{h}_1$ and as in Subsection \ref{Semigroup1},
we can define $\mathcal{A}_{0,S^1}(\Tdb)$ 
as the space of all functions $F
\colon \Tdb \rightarrow \Cdb$ such that there 
exist two sequences $(f_k)_{k\geq 1}$ in 
$C(\Tdb;S^\infty)$  
and $(h_k)_{k\geq 1}$  in $H^1(\Tdb;S^1)$ satisfying (\ref{DefA0}). This is a
Banach space for the norm given by (\ref{DefNormA0}), where 
the infimum runs over all sequences  $(f_k)_{k\geq 1}$ in 
$C(\Tdb;S^\infty)$  
and $(h_k)_{k\geq 1}$  in $H^1(\Tdb;S^1)$ satisfying (\ref{DefA0}). We clearly have 
$$
\A_{0,S^1}(\Tdb)\subset C(\Tdb)
\qquad\hbox{with}\qquad 
\norm{F}_\infty\leq \norm{F}_{{\mathcal A}_{0,S^1}},\quad F \in \A_{0,S^1}(\Tdb).
$$

The above definition of  $\A_{0,S^1}(\Tdb)$ goes back to the ``Notes and Remarks"  of \cite[Chapter 6]{Pis}, 
where this space is denoted by $\mathcal L$. 
As noticed in this reference, this space originates from  \cite{Peller}, where
it is defined in a different form.
 It turns out that $\A_{0,S^1}(\Tdb)$ is a Banach algebra for the pointwise multiplication,
see the comments after \cite[(6.4)]{Pis}. Moreover it is easy to check that $e_n\in \A_{0,S^1}(\Tdb)$ for all
$n\geq 0$ 
and that 
$$
\P\subset \A_{0,S^1}(\Tdb)
$$
is a dense subspace.

We say that a bounded sequence $m=(m(n))_{n\geq 0}$ of complex numbers
is a bounded Fourier multiplier on 
$H^1(\Tdb)$ if there exists a (necessarily unique) bounded operator $R_m\colon H^1(\Tdb)\to H^1(\Tdb)$
such that $\widehat{R_m(h)}(n) = m(n)\widehat{h}(n)$ for all $h\in H^1(\Tdb)$ and
all $n\geq 0$. We further say
that $m$ is an $S^1$-bounded Fourier multiplier if $R_m\otimes I_{S^1}$
extends to a bounded operator $R_m\overline{\otimes} I_{S^1}\colon H^1(\Tdb;S^1)\to  H^1(\Tdb;S^1)$.
The space $\M_{S^1}(H^1(\Tdb))$ of all  $S^1$-bounded Fourier multipliers $m$ on 
$H^1(\Tdb)$ equipped with the norm $\norm{m}_{{\mathcal M}_{S^1}} :
= \norm{R_m\overline{\otimes} I_{S^1}\colon H^1(\Tdb;S^1)\to  H^1(\Tdb;S^1)}$
is a Banach space. Note that $S^1$-bounded Fourier multipliers on 
$H^1(\Tdb)$ coincide with the completely bounded Fourier multipliers on 
$H^1(\Tdb)$ studied in  \cite[Chapter 6]{Pis}.
It follows from the ``Notes and Remarks"  of \cite[Chapter 6]{Pis} that the 
duality pairing
$$
\langle F, m \rangle : = \sum_{n=0}^\infty \widehat{F}(n)m(n),
\qquad F\in \P,\ m\in \M_{S^1}(H^1(\Tdb)),
$$
extends to an isometric isomorphism
$$
\M_{S^1}(H^1(\Tdb))\simeq \A_{0,S^1}(\Tdb)^*.
$$
Moreover it is proved in \cite[Theorem 6.2]{Pis} that a bounded sequence $m=(m(n))_{n\geq 0}$
belongs to $\M_{S^1}(H^1(\Rdb))$ if and only if the doubly indexed sequence
$$
\bigl((\phi_m(k,l)\bigr)_{k,l\geq 0} \qquad \phi_m(k,l)= m(k+l),
$$
belongs to ${\mathcal V}_2(\Ndb^2)$ and that in this case, we have
$$
\norm{m}_{{\mathcal M}_{S^1}} = \nu_2(\phi_m).
$$

Let $T\in B(X)$. We say that $T$
is power bounded if $(T^n)_{n\geq 0}$ is a bounded sequence and we say that 
$T$ is power $\gamma$-bounded if the set $\{T^n\, :\, n\geq 0\}$ is $\gamma$-bounded. In this case, we set
$$
\gamma(T^\bullet) = \gamma\bigl(\{T^n\, :\, n\geq 0\}\bigr).
$$

The discrete analogue of Theorem \ref{3ALZ} is the following.

\begin{theorem}\label{3Discrete}
Let $T\in B(X)$ be a power $\gamma$-bounded operator. Then there exists a (necessarily unique)
bounded homomorphism 
$$
\rho_{T} \colon \A_{0,S^1}(\Tdb)\longrightarrow B(X)
$$
such that 
$$
\rho_{T} (F) =  \sum_{n=0}^\infty \widehat{F}(n) T^n
$$
for all $F\in \P$. Moreover,
$\norm{\rho_{T}}\leq \gamma(T^\bullet)^2$.
\end{theorem}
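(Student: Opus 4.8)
The plan is to imitate the proof of Theorem \ref{3ALZ}, with the semigroup $\Rdb_+$ replaced by the discrete semigroup $\Ndb$ (equipped with the counting measure) and the Hille--Phillips calculus replaced by the polynomial calculus $F\mapsto\sum_{n\geq 0}\widehat{F}(n)T^n$. Since $\P$ is a dense subspace of $\A_{0,S^1}(\Tdb)$, and since for $F\in\P$ the sum $\sum_{n\geq 0}\widehat{F}(n)T^n$ has finitely many terms, it suffices to establish the estimate
$$
\Bignorm{\sum_{n=0}^\infty\widehat{F}(n)T^n}\,\leq\,\gamma(T^\bullet)^2\,\norm{F}_{{\mathcal A}_{0,S^1}},\qquad F\in\P.
$$
Granting this, $\rho_T$ is the unique bounded extension of $F\mapsto\sum_{n\geq 0}\widehat{F}(n)T^n$; it is a homomorphism because $\P$ is a dense subalgebra of the Banach algebra $\A_{0,S^1}(\Tdb)$ and $\rho_T(e_ne_k)=\rho_T(e_{n+k})=T^{n+k}=\rho_T(e_n)\rho_T(e_k)$, so multiplicativity extends by bilinearity and continuity.

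To prove the displayed estimate I would fix $F\in\P$, $x\in X$ and $x^*\in X^*$, and put $m(n)=\langle T^nx,x^*\rangle$ for $n\geq 0$. This is a bounded sequence; setting $\phi_m(k,l)=m(k+l)$ as in \cite[Theorem 6.2]{Pis}, we have
$$
\phi_m(k,l)=\bigl\langle T^lT^kx,x^*\bigr\rangle,\qquad k,l\geq 0.
$$
Applying Theorem \ref{2Main} on $\Omega=\Ndb$ to the functions $A(n)=B(n)=T^n$ --- which are strongly measurable (automatically, on the discrete space $\Ndb$) and $\gamma$-bounded with constant $\gamma(T^\bullet)$ --- yields $\phi_m\in{\mathcal V}_2(\Ndb^2)$ with $\nu_2(\phi_m)\leq\gamma(T^\bullet)^2\norm{x}\norm{x^*}$. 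By the characterization recalled above (from \cite[Theorem 6.2]{Pis}), this says precisely that $m\in\M_{S^1}(H^1(\Tdb))$, with $\norm{m}_{{\mathcal M}_{S^1}}=\nu_2(\phi_m)\leq\gamma(T^\bullet)^2\norm{x}\norm{x^*}$.

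It then remains to invoke the isometric duality $\M_{S^1}(H^1(\Tdb))\simeq\A_{0,S^1}(\Tdb)^*$: writing $\bigl\langle\sum_{n\geq 0}\widehat{F}(n)T^nx,x^*\bigr\rangle=\sum_{n\geq 0}\widehat{F}(n)m(n)=\langle F,m\rangle$, we get
$$
\Bigl\vert\bigl\langle\textstyle\sum_{n\geq 0}\widehat{F}(n)T^nx,x^*\bigr\rangle\Bigr\vert\,\leq\,\norm{F}_{{\mathcal A}_{0,S^1}}\,\norm{m}_{{\mathcal M}_{S^1}}\,\leq\,\gamma(T^\bullet)^2\norm{F}_{{\mathcal A}_{0,S^1}}\norm{x}\norm{x^*},
$$
and letting $x,x^*$ vary yields the estimate. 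I do not expect a genuine obstacle here: all the real work is in Theorem \ref{2Main}, and the rest merely transports its conclusion through the multiplier dualities of \cite{Pis}. The only two points needing a moment's attention are that the relation $T^{k+l}=T^lT^k$ is exactly what puts $\phi_m$ into the form $\langle B(l)A(k)x,x^*\rangle$ required by Theorem \ref{2Main}, and that the pointwise-product Banach algebra structure of $\A_{0,S^1}(\Tdb)$ together with density of $\P$ is what upgrades the bounded linear map $\rho_T$ to a bounded homomorphism; uniqueness is clear from the density of $\P$.
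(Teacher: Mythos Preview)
Your proposal is correct and follows essentially the same route as the paper's proof: reduce to the norm estimate on $\P$, fix $x,x^*$, set $m(n)=\langle T^nx,x^*\rangle$, apply Theorem \ref{2Main} on $\Omega=\Ndb$ with $A=B=T^{\,\cdotp}$ to get $\phi_m\in{\mathcal V}_2(\Ndb^2)$, translate via \cite[Theorem 6.2]{Pis} into $m\in\M_{S^1}(H^1(\Tdb))$, and conclude by the duality $\M_{S^1}(H^1(\Tdb))\simeq\A_{0,S^1}(\Tdb)^*$. Your additional remarks on measurability, multiplicativity, and uniqueness are correct but go slightly beyond what the paper makes explicit.
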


\begin{proof}
The proof is entirely similar to the one of  Theorem \ref{3ALZ}. It suffices to show that 
\begin{equation}\label{3Est2}
\Bignorm{\sum_{n=0}^\infty \widehat{F}(n) T^n}\leq  \gamma(T^\bullet)^2 \norm{F}_{{\mathcal A}_{0,S^1}},
\qquad F\in \P.
\end{equation}
Consider $F\in\P$, let $x\in X, x^*\in X^*$ and define  $m=(m(n))_{n\geq 0}$
by $m(n)=\langle T^n(x),x^*\rangle$.
Then $\phi_m(k,l)=\langle T^l T^k(x),x^*\rangle$ for all $k,l\geq 0$. Applying Theorem \ref{2Main}
with $A,B\colon\Ndb\to B(X)$ defined by $A(k)=B(k)=T^k$, we obtain that 
$\phi_m$ belongs to ${\mathcal V}_2(\Ndb^2)$ and hence that $m$ belongs
to $\M_{S^1}(H^1(\Tdb))$, with 
$\norm{m}_{{\mathcal M}_{S^1}}\leq \gamma(T^\bullet)^2\norm{x}\norm{x^*}$.

We may write 
$$
\Bigl\langle \Bigl(\sum_{n=0}^\infty \widehat{F}(n) T^n\Bigr)(x) , x^*\Bigr\rangle = \sum_{n=0}^\infty \widehat{F}(n) m(n),
$$
from which we deduce that
$$
\Bigl\vert \Bigl\langle \Bigl(\sum_{n=0}^\infty \widehat{F}(n) T^n\Bigr)(x) , x^*\Bigr\rangle \Bigr\vert
\leq  \norm{F}_{{\mathcal A}_{0,S^1}}\norm{m}_{{\mathcal M}_{S^1}}\leq
\gamma(T^\bullet)^2 \norm{F}_{{\mathcal A}_{0,S^1}}\norm{x}\norm{x^*}.
$$
Since $x,x^*$ were arbitrary, this yields (\ref{3Est2}).
\end{proof}

\bigskip
\noindent
{\bf Acknowledgements.} 
The author was supported by the ANR project Noncommutative
analysis on groups and quantum groups (No./ANR-19-CE40-0002).

\end{document}